\newcommand{\norm}[2][]{{#1\|}{#2}{#1\|}}
\newcommand{\inprod}[3][]{{#1\langle}{#2},{#3}{#1\rangle}}
\DeclareMathOperator*{\argmin}{arg\,min}
\DeclareMathOperator{\Exp}{\mathbb{E}}
\DeclareMathOperator{\Id}{Id}
\newcommand{\R}{\mathbb{R}}
\newcommand{\N}{\mathbb{N}}
\newtheorem{prop}{Proposition}[section]
\numberwithin{prop}{section}
\newtheorem{ass}{Assumption}[section]
\numberwithin{ass}{section}
\newtheorem{lem}{Lemma}[section]
\numberwithin{lem}{section}
\newtheorem{thm}{Theorem}[section]
\numberwithin{thm}{section}
\numberwithin{cor}{section}
\numberwithin{defin}{section}
\numberwithin{rem}{section}
\newcommand{\propref}[1]{Proposition~\ref{#1}}
\newcommand{\assref}[1]{Assumption~\ref{#1}}
\newcommand{\lemref}[1]{Lemma~\ref{#1}}
\newcommand{\thmref}[1]{Theorem~\ref{#1}}
	\crefname{prop}{Proposition}{Propositions}
	\crefname{ass}{Assumption}{Assumptions}
	\crefname{lem}{Lemma}{Lemmas}
	\crefname{thm}{Theorem}{Theorems}
	\crefname{cor}{Corollary}{Corollaries}
	\crefname{defin}{Definition}{Definitions}
	\crefname{rem}{Remark}{Remarks}
\newcommand{\algref}[1]{Algorithm~\ref{#1}}
\newcommand{\secref}[1]{Section~\ref{#1}}
\newenvironment{bigmatrix}
{
\setlength\arraycolsep{0.7em}

\begin{bmatrix}
}
{
\end{bmatrix}
}
\newcommand{\keywords}[1]{}
\title{Cocoercivity, Smoothness and Bias in Variance-Reduced Stochastic Gradient Methods}
\author{
	Martin Morin\\
	Dept. of Automatic Control\\
	Lund University \\
	\texttt{martin.morin@control.lth.se}
	\And
	Pontus Giselsson\\
	Dept. of Automatic Control\\
	Lund University \\
	\texttt{pontus.giselsson@control.lth.se}
}
\begin{document}
\maketitle

\begin{abstract}
	With the purpose of examining biased updates in variance-reduced stochastic gradient methods, we introduce SVAG, a SAG/SAGA-like method with adjustable bias.
	SVAG is analyzed in a cocoercive root-finding setting, a setting which yields the same results as in the usual smooth convex optimization setting for the ordinary proximal-gradient method.
	We show that the same is not true for SVAG when biased updates are used.
	The step-size requirements for when the operators are gradients are significantly less restrictive compared to when they are not.
	This highlights the need to not rely solely on cocoercivity when analyzing variance-reduced methods meant for optimization.
	Our analysis either match or improve on previously known convergence conditions for SAG and SAGA.
	However, in the biased cases they still do not correspond well with practical experiences and we therefore examine the effect of bias numerically on a set of classification problems.
	The choice of bias seem to primarily affect the early stages of convergence and in most cases the differences vanish in the later stages of convergence.
	However, the effect of the bias choice is still significant in a couple of cases.
	\keywords{Variance Reduction \and Stochastic Gradient \and Bias \and SAG \and SAGA \and Monotone Inclusion \and Root Finding}
\end{abstract}

\section{Introduction}
Variance-reduced stochastic gradient (VR-SG) methods is a family of iterative optimization algorithms that combine the low per-iteration computational cost of the ordinary stochastic gradient descent and the attractive convergence properties of gradient descent.
Just as ordinary stochastic gradient descent, VR-SG methods solve smooth optimization problems on finite sum form,
\begin{equation}\label{eq:optprob}
	\min_{x\in\R^N} \tfrac{1}{n}\sum_{i=1}^n f_i(x)
\end{equation}
where,  for all $i\in\{1,\dots,n\}$, $f_i:\R^N \to \R$ is a convex function that is $L$-smooth, i.e., $f_i$ is differentiable with $L$-Lipschitz continuous gradient.
These types of problems are common in model fitting, supervised learning, and empirical risk minimization which, together with the nice convergence properties of VR-SG methods, has lead to a great amount of research on VR-SG methods and the development of several different variants, e.g., \cite{le_roux_stochastic_2012,schmidt_minimizing_2017,defazio_saga_2014,johnson_accelerating_2013,xiao_proximal_2014,hofmann_variance_2015,konecny_semi-stochastic_2017,defazioFinitoFasterPermutable2014,mairalOptimizationFirstorderSurrogate2013,allen-zhuKatyushaFirstDirect2017,nguyenSARAHNovelMethod2017,kovalevDonJumpHoops2019,tangAcceleratingSGDUsing2019,shalev-shwartzStochasticDualCoordinate2013,hanzelySEGAVarianceReduction2018}.

Broadly speaking, VR-SG methods form a stochastic estimate of the objective gradient by combining one or a few newly evaluated terms of the gradient with all previously evaluated terms.
Classic examples of this can be seen in the SAG \cite{le_roux_stochastic_2012,schmidt_minimizing_2017} and SAGA \cite{defazio_saga_2014} algorithms.
Given some initial iterates $x^0,y_1^0,\dots,y_n^0\in\R^N$ and step-size $\lambda > 0$, SAGA samples $i^k$ uniformly from $\{1,\dots,n\}$ and then updates the iterates as
\begin{align*}
	x^{k+1} &= x^k - \lambda \left(\nabla f_{i^k}(x^k) - y_{i^k}^k  + \tfrac{1}{n}\sum_{j=1}^n y_j^k \right), \\
	y_{i^k}^{k+1} &= \nabla f_{i^k}(x^k), \\
	y_j^{k+1} &= y_j^k  \quad\text{for all}\quad j \neq i^k,
\end{align*}
for $k\in\{0,1,\dots\}$.
The update of $x^{k+1}$ is said to be unbiased since the expected value of $x^{k+1}$ at iteration $k$ is equal to an ordinary gradient descent update.
This is in contrast to the biased SAG, which is identical to SAGA except that the update of $x^{k+1}$ is
\begin{align*}
	x^{k+1} &= x^k - \lambda \left(\tfrac{1}{n}\big(\nabla f_{i^k}(x^k) - y_{i^k}^k \big)  + \tfrac{1}{n}\sum_{j=1}^n y_j^k \right)
\end{align*}
and the expected value of $x^{k+1}$ now includes a term containing the old gradients $\frac{1}{n}\sum_{i=1}^n y_i^k$.
Although SAG shows that unbiasedness is not essential for the convergence of VR-SG methods, the effects of this bias are unclear.
The majority of VR-SG methods are unbiased but existing works have not established any clear advantage of either the biased SAG or the unbiased SAGA.
This paper will examine the effect of bias and its interplay with different problem assumptions for SAG/SAGA-like methods.

\subsection{Problem and Algorithm}\label{sec:probalg}

Instead of solving \eqref{eq:optprob} directly, we consider a closely related but more general root-finding problem.
Throughout the paper, we consider the Euclidean space $\R^N$ and the problem of finding $x\in\R^N$ such that
\begin{equation}\label{eq:zerprob}
	0 = Rx \coloneqq \tfrac{1}{n}\sum_{i = 1}^n R_ix
\end{equation}
where $R_i : \R^N \to \R^N$ is $\frac{1}{L}$-cocoercive---see \secref{sec:prel}---for all $i\in\{1,\dots,n\}$.
Since $L$-smoothness of a convex function is equivalent to $\frac{1}{L}$-cocoercivity of the gradient \cite[Corollary 18.17]{bauschke_convex_2017}, the smooth optimization problem in \eqref{eq:optprob} can be recovered by setting $R_i = \nabla f_i$ for all $i\in\{1,\dots,n\}$ in \eqref{eq:zerprob}.
Problem \eqref{eq:zerprob} is also interesting in its own right with it and the closely related fixed point problem of finding $x\in\R^N$ such that $x = (\Id - \alpha R)x$ where $\alpha \in (0,2L^{-1})$ both having applications in for instance feasibility and non-linear signal recovery problems, see \cite{combettesSolvingMonotoneInclusions2004,combettesFixedPointFramework2021,combettesSolvingCompositeFixed2020} and the references therein.
To solve this problem, we present the \emph{Stochastic Variance Adjusted Gradient} (SVAG) algorithm.

\begin{algorithm}[h]
	\caption{SVAG}
	\label{alg:svag}
	\begin{algorithmic}
		\STATE {\bfseries input} single valued operators $R_i: \R^N\to\R^N$, initial state $x^0\in\R^N$ and $y_1^0,\dots,y_n^0\in\R^N$, step-size $\lambda > 0$, innovation weight $\theta \in \R$
		\FOR{$k = 0,1,\dots$}
		\STATE Sample $i^k$ uniformly from $\{1,\dots ,n\}$
		\STATE $x^{k+1} = x^k - \lambda \left(\frac{\theta}{n}\big(R_{i^k}x^k - y_{i^k}^k \big)  + \frac{1}{n}\sum_{j=1}^n y_j^k \right)$
		\STATE $y_{i^k}^{k+1} = R_{i^k}x^k$
		\STATE $y_j^{k+1} = y_j^k$ {for all} $j \neq i^k$
		\ENDFOR
	\end{algorithmic}
\end{algorithm}

SVAG is heavily inspired by SAG and SAGA with both being special cases, $\theta = 1$ and $\theta = n$ respectively.
Just like SAG and SAGA, in each iteration, SVAG evaluates one operator $R_{i^k}$ and stores the results in $y_{i^k}^{k+1}$.
An estimate of the full operator is then formed as
\begin{align*}
	Rx^k \approx \widetilde{R}^k = \tfrac{\theta}{n} (R_{i^k}x^k - y_{i^k}^k) + \tfrac{1}{n}\sum_{j=1}^n y_j^k.
\end{align*}
The scalar $\theta$ determine how much weight should be put on the new information gained from evaluating $R_{i^k}x^k$.
If the innovation, $R_{i^k}x^k - y_{i^k}^k$, is highly correlated with the total innovation, $Rx^k - \tfrac{1}{n}\sum_{j=1}^n y_j^k$, a large innovation weight $\theta$ can be chosen and vice versa.
The innovation weight $\theta$ also determines the bias of SVAG.
Taking the expected value $\widetilde{R}^k$ given the information at iteration $k$ gives
\begin{align*}
	\Exp[\widetilde{R}^k|x^k, y_1^k,\dots,y_n^k] = \tfrac{\theta}{n} Rx^k + (1-\tfrac{\theta}{n})\tfrac{1}{n} \sum_{j=1}^n y_j^k
\end{align*}
which reveals that $\widetilde{R}^k$ is an unbiased estimate of $Rx^k$ if $\theta=n$, i.e., in the SAGA case.
Any other choice, for instance SAG where $\theta = 1$, yields a bias towards $\tfrac{1}{n}\sum_{j=1}^n y_j^k$.

\subsection{Contribution}
The theory behind finding roots of monotone operators in general, and cocoercive operators in particular, has been put to good use when analyzing first order optimization methods, examples include \cite{rockafellarMonotoneOperatorsProximal1976,bauschke_convex_2017,latafatPrimalDualProximalAlgorithms2018,briceno-ariasForwardBackwardHalfForwardAlgorithm2018,davisThreeOperatorSplittingScheme2017,tsengModifiedForwardBackwardSplitting2000}.
For instance can both the proximal-gradient and ADMM methods be seen as instances of classic root-finding fixed-point iterations and analyzed as such, namely forward-backward and Douglas--Rachford respectively.
The resulting analyses can often be simple and intuitive and even though the root-finding formulation is more general---not all cocoercive operators are gradients of convex functions---they are not necessarily more conservative.
For example, analyzing proximal-gradient as forward-backward splitting yield the same rates and step-size conditions as analyzing it as a minimization method in the smooth/cocoercive setting, see for instance \cite[Theorem 2.1.14]{nesterov_introductory_2004} and \cite[Example 5.18 and Proposition 4.39]{bauschke_convex_2017}.
However, the main contribution of this paper is to show that the same is not true for VR-SG methods, in particular it is not true for SVAG when it is biased.

The results consist of two main convergence theorems for SVAG:
one in the cocoercive operator case and one in the cocoercive gradient case, the later being equivalent to the minimization of a smooth and convex finite sum.
Both of these theorems match or improve upon previously known results for the SAG and SAGA special cases.
Comparing the two settings reveal that SVAG can use significantly larger step-sizes, with faster convergence as a result, in the cocoercive gradient case compared to the general cocoercive operator case.
In the operator case, an upper bound on the step-size that scales as $\mathcal{O}(n^{-1})$ is found where $n$ is the number of terms in \eqref{eq:zerprob}.
However, the restrictions on the step-size loosen with reduced bias and the unfavorable $\mathcal{O}(n^{-1})$ scaling disappears completely when SVAG is unbiased.
In the gradient case, this bad scaling never occurs, regardless of bias.
We provide examples in which SVAG diverges with step-sizes larger than the theoretical upper bounds in the operator case.
Since the gradient case is proven to converge with much larger step-sizes, this verifies the difference between the convergence behavior of cocoercive operators and gradients.

These results indicate that it is inadvisable to only rely on more general monotone operator theory and not explicitly use the gradient property when analyzing VR-SG methods meant for optimization.
However, the large impact of bias in the cocoercive operator setting also raises the question regarding its importance in other non-gradient settings as well.
One such setting of interest, where the operators are not gradients of convex functions, is the case of saddle-point problems.
These problems are of importance in optimization due to their use in primal-dual methods but recently they have also gained a lot of attention due to their applications in the training of GANs in machine learning.
Because of this, and due to the attractive properties of VR-SG methods in the convex optimization setting, efforts have gone into applying VR-SG methods to saddle-point problems as well \cite{palaniappanStochasticVarianceReduction2016,shiBregmanDivergenceStochastic2017,chavdarovaReducingNoiseGAN2019,carmonVarianceReductionMatrix2019,zhangUnifyingFrameworkVariance2019}.
Most of these efforts have been unbiased, something our analysis suggests is wise.
With that said, it is important to note that our analysis is often not directly applicable due the fact that saddle-point problems rarely are cocoercive.

The main reason for the recent rise in popularity of variance-reduced stochastic methods is their use in the optimization setting, but, although bias plays a big role in the cocoercive operator case, our results are not as clear in this setting.
For instance, the theoretical results for the SAG and SAGA special cases yield identical rates and step-size conditions with no clear advantage to either special case.
Further experiments are therefore performed where several different choices of bias in SVAG are examined on a set of logistic regression and SVM optimization problems.
However, the results of these experiments are in line with existing works with no significant advantage of any particular bias choice in SVAG, these choices include both SAG and SAGA.
Although the performance difference is significant in some cases, no single choice of bias performs best for all problems and all bias choices eventually converge with the same rate in the majority of the cases.
Furthermore, the theoretical maximal step-size can routinely be exceeded in these experiments, indicating that there is room for further theoretical improvements.

\subsection{Related Work}
There is a large array of options for solving \eqref{eq:zerprob}.
For $n \in \{1,2,3,4\}$, several operator splitting methods exist with varying assumptions on the operator properties, see for instance \cite{goldsteinConvexProgrammingHilbert1964,levitinConstrainedMinimizationMethods1966,lionsSplittingAlgorithmsSum1979,giselssonNonlinearForwardBackwardSplitting2021,tsengModifiedForwardBackwardSplitting2000,briceno-ariasForwardBackwardHalfForwardAlgorithm2018} and the references therein.
However, while these methods also can be applied for larger $n$ by simply regrouping the terms, they do not utilize the finite sum structure of the problem.
Algorithms have therefore been designed to utilize this structure for arbitrary large $n$ with the hopes of reducing the total computational costs, e.g., \cite{combettesPrimalDualSplittingAlgorithm2012,raguetGeneralizedForwardBackwardSplitting2013,combettesAsynchronousBlockiterativePrimaldual2018,combettesSolvingCompositeFixed2020}.
In particular the problem and method in \cite{combettesSolvingCompositeFixed2020} is closely related to the root-finding problem and algorithm considered in this paper.

Using the notation of \cite{combettesSolvingCompositeFixed2020}, when $T_0 = \Id$, the fixed point problem of \cite{combettesSolvingCompositeFixed2020} can be mapped to \eqref{eq:zerprob} via $R_i = \omega_i(\Id - T_i)$ and vice verse.
\footnote{If $T_i$ is $\alpha_i$-averaged, as assumed in \cite{combettesSolvingCompositeFixed2020}, $R_i$ is $(2\alpha_i\omega_i)^{-1}$-cocoercive.}
Many applications considered in \cite{combettesSolvingCompositeFixed2020} can therefore, at least in part, be tackled with our algorithm as well.
In particular, the problem of finding common fixed points of firmly nonexpansive operators can directly be solved by our algorithm.
However, \cite{combettesSolvingCompositeFixed2020} is more general in that it allows for $T_0 \neq \Id$ and works in general real Hilbert spaces.
Comparing with the algorithm of \cite{combettesSolvingCompositeFixed2020} we see that, just as our algorithm is a generalization of SAG/SAGA, it can be seen as a generalization of Finito \cite{defazioFinitoFasterPermutable2014}, another classic VR-SG method.
It generalize Finito in several way, for instance it allows for an additional proximal/backward step and it replace the stochastic selection with a different selection criteria.
However, in the optimization setting it still suffers from the same drawback as Finito when compared to SAG/SAGA-like algorithms.
It still needs to store a full copy of the iterate for each term in objective.
Since SAG, SAGA, and SVAG only need to store the gradient of each term, they can utilize any potential structure of the gradients to reduce the storage requirements \cite{le_roux_stochastic_2012}.
Although the differences above are interesting in their own right, the notion of bias we examine in this paper is not applicable to Finito-like algorithms.

SAG and SAGA were compared in \cite{defazio_saga_2014} but with no direct focus on the effects of bias.
Other examples of research on SAG and SAGA include acceleration, sampling strategy selection, and ways to reduce the memory requirement \cite{hofmann_variance_2015,schmidt_non-uniform_2015,gowerStochasticQuasigradientMethods2021,qian_saga_2019,morinSamplingUpdateFrequencies2020,zhouDirectAccelerationSAGA2019}.
However, none of these works, including \cite{morinSamplingUpdateFrequencies2020} that was written by the authors, analyze the biased case we consider in this paper.
Even the works considering non-uniform sampling of gradients \cite{schmidt_non-uniform_2015,gowerStochasticQuasigradientMethods2021,qian_saga_2019,morinSamplingUpdateFrequencies2020} perform some sort of bias correction in order to remain unbiased.
Furthermore, in order to keep the focus on the effects of the bias we have refrained from bringing in such generalizations into this work, making it distinct from the above research.
To the authors' knowledge, the only theoretical convergence result for biased VR-SG methods are the ones for SAG \cite{le_roux_stochastic_2012,schmidt_minimizing_2017}.
But, since they only consider SAG, they fail to capture the breadth of SVAG and our proof is the first to simultaneously capture SAG, SAGA, and more.

Since the release of the first preprint of this paper, \cite{driggsBiasedStochasticGradient2020} has also provided a proof covering the gradient case of both SAG and SAGA, and some choices of bias in SVAG.
All though \cite{driggsBiasedStochasticGradient2020} does not consider cocoercive operators, it is some sense more general with them considering a general biased stochastic estimator of the gradient.
This generality comes at the cost of a more conservative analysis with their step-size scaling with $\mathcal{O}(n^{-1})$ in all cases.

\section{Preliminaries and Notation}\label{sec:prel}
Let $\R$ denote the real numbers and let the natural numbers be denoted $\N = \{0,1,2,\dots\}$.
Let $\inprod{\cdot}{\cdot}$ denote the standard Euclidean inner product and $\norm{\cdot} = \sqrt{\inprod{\cdot}{\cdot}}$ the standard 2-norm.
The scaled inner product and norm on we denote as $\inprod{\cdot}{\cdot}_\Sigma = \inprod{\Sigma(\cdot)}{\cdot}$ and $\norm{\cdot}_{\Sigma} = \sqrt{\inprod{\cdot}{\cdot}_{\Sigma}}$ where $\Sigma$ is a positive definite matrix.
If $\Sigma$ is not positive definite, $\norm{\cdot}_{\Sigma}$ is not a norm but we keep the notation for convenience.

Let $n$ be the number of operators in \eqref{eq:zerprob}.
The vector $\mathbf{1}$ is the vector of all ones in $\R^n$ and $e_i$ be the vector in $\R^n$ of all zeros except the $i$:th element that contains a $1$.
The matrix $I$ is an identity matrix with the size derived from context and $E_i = e_ie_i^T$.

The symbol $\otimes$ denotes the Kronecker product of two matrices.
The Kronecker product is linear in both arguments and the following properties hold
\begin{equation*}
	(A\otimes B)^T = A^T \otimes B^T, \qquad (A\otimes B)(C\otimes D) = (AC)\otimes (BD).
\end{equation*}
In the last property it is assumed that the dimensions are such that the matrix multiplications are well defined.
The eigenvalues of $A\otimes B$ are given by
\begin{align}\label{eq:kroneig}
	\tau_i\mu_j \text{ for all } i \in \{1,\dots ,m\}, j \in \{ 1,\dots ,l \}
\end{align}
where $\tau_i$ and $\mu_j$ are the eigenvalues of $A$ and $B$ respectively.

The Cartesian product of two sets $C_1$ and $C_2$ is defined as
\begin{align*}
	C_1\times C_2 = \{(c_1,c_2) \mid c_1\in C_1, c_2\in C_2\}.
\end{align*}
From this definition we see that if $C_1$ and $C_2$ are closed and convex, so is $C_1\times C_2$.

Let $X^\star$ be the set of all solutions of $\eqref{eq:zerprob}$,
\begin{align*}
	X^\star =  \{x\,|\, 0 = {\textstyle\frac{1}{n}\sum_{i=1}^n} R_ix \}
\end{align*}
and define $Z^\star$ as the set of primal-dual solutions
\begin{align*}
	Z^\star = \{(x, R_1x,\dots , R_nx) \,|\, 0 = {\textstyle\frac{1}{n}\sum_{i=1}^n} R_ix \}.
\end{align*}
Assuming they exists, $x^\star$ denotes a solution to \eqref{eq:zerprob} and $z^\star$ denotes a primal-dual solution, i.e., $x^\star \in X^\star$ and $z^\star \in Z^\star$.

A single valued operator $R: \R^N \to \R^N$ is $\frac{1}{L}$-cocoercive if
\begin{equation}\label{eq:cocoercivity}
	\begin{aligned}
		&\inprod{Rx - Ry}{x-y}
		\geq \tfrac{1}{L} \norm{Rx - Ry}^2
	\end{aligned}
\end{equation}
holds for all $x,y \in \R^N$.
An operator that is $\frac{1}{L}$-cocoercive is $L$-Lipschitz continuous.
The set of zeros of a cocoercive operator $R$ is closed and convex.

A differentiable convex function $f: \R^N \to \R$ is called $L$-smooth if the gradient is $\frac{1}{L}$-cocoercive.
Equivalently, a differentiable convex function is $L$-smooth if
\begin{equation}\label{eq:descent}
	f(y) \leq f(x) + \inprod{\nabla f(x)}{y-x} + \tfrac{L}{2}\norm{y-x}^2
\end{equation}
holds for all $x,y \in \R^N$.

If $f_i: \R^N \to \R$ is a differentiable convex functions for each $i\in\{1,\dots,n\}$, the minimization of $\sum_{i=1}^n f_i(x)$ is equivalent to \eqref{eq:zerprob} with $R_i = \nabla f_i$.

For more details regarding monotone operators and convex functions see \cite{nesterov_introductory_2004,bauschke_convex_2017}.

To establish almost sure sequence convergence of the stochastic algorithm, the following propositions will be used.
The first is from \cite{robbins_convergence_1971} and establishes convergence of non-negative almost super-martingales.
The second is based on \cite{combettes_stochastic_2015} and provides the tool to show almost sure sequence convergence.
\begin{prop}\label{prop:supermartingale}
	Let $(\Omega, \mathcal{F}, P)$ be a probability space and $\mathcal{F}_0 \subset \mathcal{F}_1 \subset \dots$ be a sequence of sub-$\sigma$-algebras of $\mathcal{F}$.
	For all $k \in \N$, let $z^k$, $\beta^k$, $\xi^k$ and $\zeta^k$ be non-negative $\mathcal{F}_k$-measurable random variables.
	If $\sum_{i=0}^\infty \beta^i < \infty$, $\sum_{i=0}^\infty \xi^i < \infty$ and
	\begin{align*}
		\Exp[z^{k+1}|\mathcal{F}_k] \leq (1 + \beta^k)z^k + \xi^k - \zeta^k
	\end{align*}
	hold almost surely for all $k \in \N$, then $z^k$ converges a.s.\ to a finite valued random variable and $\sum_{i=0}^\infty \zeta^i < \infty$ almost surely.
	\begin{proof}
		See \cite[Theorem 1]{robbins_convergence_1971}.
	\end{proof}
\end{prop}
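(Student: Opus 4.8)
The statement is the Robbins--Siegmund almost-supermartingale convergence theorem, so the plan is to massage the recursion into an honest supermartingale and then invoke a convergence theorem for supermartingales whose negative parts are controlled in $L^1$. First I would absorb the multiplicative factor $1+\beta^k$. Set $a_k = \prod_{j=0}^{k-1}(1+\beta^j)$, which is nonnegative, $\mathcal{F}_{k-1}$-measurable, nondecreasing, and converges a.s.\ to a finite limit $a_\infty \in [1,\infty)$ because $\sum_j \beta^j < \infty$. Dividing the hypothesis by $a_{k+1} = (1+\beta^k)a_k$ and pulling the $\mathcal{F}_k$-measurable factor $a_{k+1}$ out of the conditional expectation gives
$$\Exp[u^{k+1}\mid\mathcal{F}_k] \leq u^k + p^k - q^k,$$
where $u^k = z^k/a_k \geq 0$, $p^k = \xi^k/a_{k+1}\geq 0$, and $q^k = \zeta^k/a_{k+1}\geq 0$. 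Since $a_k\geq 1$ we have $\sum_k p^k \leq \sum_k \xi^k < \infty$ and $\sum_k \zeta^k \leq a_\infty \sum_k q^k$, so it suffices to control $u^k$ and $\sum_k q^k$. Integrability of $z^k$ (needed to speak of a supermartingale) propagates inductively from that of $z^0$ and I would treat it as routine.

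Next I would remove the additive perturbation. The process $w^k = u^k - \sum_{j=0}^{k-1} p^j$ is $\mathcal{F}_k$-measurable, and substituting the rescaled recursion,
$$\Exp[w^{k+1}\mid\mathcal{F}_k] = \Exp[u^{k+1}\mid\mathcal{F}_k] - \sum_{j=0}^{k}p^j \leq w^k - q^k \leq w^k,$$
so $(w^k)$ is a supermartingale whose one-step decrease is at least $q^k$.

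The main obstacle is that $w^k$ need not be bounded below by a constant, and its negative part $(w^k)^- \leq \sum_{j<k}p^j$ need not be uniformly $L^1$-bounded, since the hypotheses give only almost-sure (not $L^1$) summability of $\xi^k$; this is exactly the point where a naive application of martingale convergence fails. I would resolve it by localization. Define the stopping times $\tau_m = \inf\{k : \sum_{j=0}^{k}\xi^j > m\}$ for $m \in \{1,2,\dots\}$; each is an $\mathcal{F}_k$-stopping time, and $\tau_m \to \infty$ a.s.\ because $\sum_j \xi^j < \infty$ a.s. For the stopped supermartingale $w^{k\wedge\tau_m}$ the cumulative correction obeys $\sum_{j<k\wedge\tau_m}p^j \leq \sum_{j<k\wedge\tau_m}\xi^j \leq m$ by the definition of $\tau_m$, hence $(w^{k\wedge\tau_m})^- \leq m$ deterministically and $\sup_k\Exp[(w^{k\wedge\tau_m})^-] \leq m < \infty$. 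The supermartingale convergence theorem then gives that $w^{k\wedge\tau_m}$ converges a.s.; summing the one-step decrease and taking expectations on the stopped process yields $\Exp[\sum_{j<\tau_m}q^j] \leq \Exp[w^0] + m < \infty$, so $\sum_{j<\tau_m}q^j < \infty$ a.s.

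Finally I would let the localization exhaust the probability space. On the almost-sure event $\{\sum_j\beta^j<\infty,\ \sum_j\xi^j<\infty\}$, for each $\omega$ there is an $m$ with $\tau_m(\omega)=\infty$, so $w^{k\wedge\tau_m}=w^k$ for all $k$; taking the increasing union over $m$ shows $w^k$ converges a.s.\ to a finite limit and $\sum_j q^j<\infty$ a.s. Since $\sum_j p^j<\infty$, the identity $u^k = w^k + \sum_{j<k}p^j$ shows $u^k$ converges a.s.\ to a finite limit, whence $z^k = a_k u^k$ converges a.s.\ to a finite limit because $a_k\to a_\infty\in[1,\infty)$; and $\sum_k\zeta^k \leq a_\infty\sum_k q^k<\infty$ a.s. This reproduces the claimed conclusion and matches the cited result of Robbins and Siegmund.
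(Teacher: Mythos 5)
Your proposal does not follow the paper, because the paper offers no proof at all here: it simply defers to \cite[Theorem 1]{robbins_convergence_1971}. What you have written is, in essence, a reconstruction of that cited theorem's own proof --- normalize by $a_k=\prod_{j<k}(1+\beta^j)$ to remove the multiplicative factor, subtract the accumulated perturbations to obtain a supermartingale, localize with the stopping times $\tau_m=\inf\{k:\sum_{j\le k}\xi^j>m\}$ so that Doob's supermartingale convergence theorem applies, and let $m\to\infty$ using $\tau_m\to\infty$ a.s. This structure, including the deterministic bound $(w^{k\wedge\tau_m})^-\le m$ and the recovery of $z^k=a_ku^k$ and $\sum_k\zeta^k\le a_\infty\sum_k q^k$ at the end, is sound and is exactly the right way to prove the statement from scratch rather than by citation.

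There is, however, one step that is wrong as written: the claim that integrability of $z^k$ ``propagates inductively from that of $z^0$'' and can be treated as routine. It cannot: $\beta^k$ and $\xi^k$ are only assumed a.s.\ finite, not integrable, so even $\Exp[z^1]\le\Exp[(1+\beta^0)z^0+\xi^0]$ can be infinite when $z^0$ is bounded; worse, the proposition never assumes $\Exp[z^0]<\infty$, yet your final bound $\Exp[\sum_{j<\tau_m}q^j]\le\Exp[w^0]+m$ uses exactly that. The repair is one more localization of the same kind you already employ: fix $c>0$ and multiply the hypothesis through by $\mathbf{1}_{\{z^0\le c\}}$, which is $\mathcal{F}_0$-measurable and therefore preserves the recursion with $z^k,\xi^k,\zeta^k$ replaced by $z^k\mathbf{1}_{\{z^0\le c\}}$, etc. For this process the stopped iterates satisfy, by telescoping and the pathwise bound $\sum_{j<k\wedge\tau_m}p^j\le m$,
\begin{align*}
\Exp\big[u^{k\wedge\tau_m}\mathbf{1}_{\{z^0\le c\}}\big]\le c+m\quad\text{for all }k,
\end{align*}
so the stopped, centered process is a genuine integrable supermartingale bounded below by $-m$, Doob's theorem applies, and your expectation bound on $\sum_{j<\tau_m}q^j$ holds with $\Exp[w^0]$ replaced by $c$. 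Concluding a.s.\ on $\{z^0\le c\}\cap\{\tau_m=\infty\}$ and letting $m\to\infty$ and then $c\to\infty$ (these events exhaust $\Omega$ up to a null set, since $z^0$ is a.s.\ finite and $\sum_j\xi^j<\infty$ a.s.) completes the argument. With this amendment your proof is correct.
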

\begin{prop}\label{prop:stochseqconv}
	Let $Z$ be a non-empty closed subset of a finite dimensional Hilbert space $H$, let $\phi: [0,\infty) \to [0,\infty)$ be a strictly increasing function such that $\phi(t) \to \infty$ as $t \to \infty$, and let $(x^k)_{k\in \N}$ be a sequence of $H$-valued random variables.
	If $\phi(\norm{x^k - z})$ converges a.s.\ to a finite valued non-negative random variable for all $z \in Z$, then the following hold:
	\begin{enumerate}[(i)]
		\item\ $(x^k)_{k\in\N}$ is bounded almost surely.
		\item Suppose the cluster points of $(x^k)_{k\in\N}$ are a.s.\ in $Z$, then $(x^k)_{k\in\N}$ converge a.s.\ to a $Z$-valued random variable.
	\end{enumerate}
	\begin{proof}
		In finite dimensional Hilbert spaces, these two statements are the same as statements (ii) and (iv) of \cite[Proposition 2.3]{combettes_stochastic_2015}.
		Hence, consider the proof of \cite[Proposition 2.3]{combettes_stochastic_2015} restricted to finite dimensional Hilbert spaces.
		The proof of (ii) in \cite[Proposition 2.3]{combettes_stochastic_2015} only relies on the a.s.\ convergence of  $\phi(\norm{x^k - z})$ and hence is implied by the assumptions of this proposition.
		This proves our first statement.
		The proof of (iv) in \cite[Proposition 2.3]{combettes_stochastic_2015} only relies on (iii) of \cite[Proposition 2.3]{combettes_stochastic_2015} which in turn is implied by (ii) of \cite[Proposition 2.3]{combettes_stochastic_2015}, i.e., our first statement.
		This proves our second statement.
	\end{proof}
\end{prop}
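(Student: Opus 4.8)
The plan is to establish both parts pathwise after reducing the uncountable family of hypotheses to a countable one. Since $H$ is finite dimensional it is separable, so I would fix a countable dense subset $D \subseteq Z$ (which exists because $Z \neq \emptyset$). For each $z \in D$ the hypothesis supplies a probability-one event on which $\phi(\norm{x^k - z})$ converges to a finite limit, and intersecting these \emph{countably many} events yields a single probability-one event $\Omega_0$ on which the convergence holds simultaneously for all $z \in D$. Everything below is argued on $\Omega_0$.

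For part (i) I would fix a single $z_0 \in D$. On $\Omega_0$ the real sequence $\phi(\norm{x^k - z_0})$ converges and is therefore bounded by some finite $M$; since $\phi(t) \to \infty$ as $t \to \infty$, there is a finite $T$ with $\phi(t) > M$ whenever $t > T$, forcing $\norm{x^k - z_0} \le T$ for every $k$. Hence $(x^k)$ is bounded a.s.

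The key preparatory step for part (ii) is to upgrade the convergence of $\phi(\norm{x^k - z})$ to convergence of $\norm{x^k - z}$ itself, for each $z \in D$, \emph{without} assuming $\phi$ continuous. Writing $a_k = \norm{x^k - z}$, suppose $(a_k)$ had two distinct cluster points $\alpha < \beta$. Using only strict monotonicity, the values $\phi(a_k)$ along a subsequence tending to $\alpha$ accumulate inside the interval bounded by the one-sided limits of $\phi$ at $\alpha$, and those along a subsequence tending to $\beta$ accumulate inside the analogous interval at $\beta$; since $\alpha < \beta$ and $\phi$ is strictly increasing, $\phi(\alpha^+) < \phi(\beta^-)$, so these accumulation sets are disjoint, contradicting that $\phi(a_k)$ has a single limit. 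Thus $\norm{x^k - z}$ converges to some $d(z)$ for every $z \in D$.

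For part (ii), boundedness from (i) together with $\dim H < \infty$ ensures $(x^k)$ has a cluster point, and by hypothesis all its cluster points lie in $Z$ on a probability-one event that I intersect with $\Omega_0$. If $x$ and $y$ are any two cluster points, passing to the relevant subsequences and using continuity of the norm gives $\norm{x - z} = d(z) = \norm{y - z}$ for all $z \in D$, hence for all $z \in Z$ by density; taking $z = x \in Z$ yields $\norm{y - x} = 0$, so the cluster point is unique and the bounded sequence converges to a limit that is itself a cluster point, hence $Z$-valued, and measurable as an a.s. limit of the $\mathcal{F}_k$-measurable $x^k$. I expect the main obstacle to be precisely this interplay between the pathwise analysis and the almost-sure framework: making the per-$z$ convergence hold jointly over $Z$ through the countable dense set $D$, and carrying out the discontinuity-robust monotonicity argument of the third paragraph, which would be immediate via $\norm{x^k - z} = \phi^{-1}(\phi(\norm{x^k - z}))$ if $\phi$ were assumed continuous.
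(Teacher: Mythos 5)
Your proof is correct, but it takes a genuinely different route from the paper: the paper does not prove this proposition from scratch at all, it simply invokes \cite[Proposition 2.3]{combettes_stochastic_2015} restricted to finite-dimensional Hilbert spaces, and merely records which statements of that proposition correspond to (i) and (ii) and which parts of its proof depend on which. Your argument is instead self-contained and explicitly finite-dimensional: you reduce the uncountable family of hypotheses to a countable dense subset $D \subseteq Z$ and a single full-measure event $\Omega_0$ (the same device that underlies the cited result, where separability of the Hilbert space plays the role that finite-dimensionality plays for you); you get boundedness from the divergence of $\phi$ at infinity; and, notably, you upgrade convergence of $\phi(\norm{x^k-z})$ to convergence of $\norm{x^k-z}$ via a two-cluster-point argument using only strict monotonicity of $\phi$ together with its one-sided limits, which correctly handles the fact that $\phi$ is not assumed continuous (the naive route through $\phi^{-1}$ would be incomplete). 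Part (ii) is then closed by Bolzano--Weierstrass, valid since $\dim H<\infty$, and the observation that two cluster points $x,y$ satisfy $\norm{x-z}=\norm{y-z}$ on $D$, hence on $Z$ by continuity and density, hence $x=y$ by taking $z=x\in Z$. What each approach buys: the paper's citation is shorter and rests on a result valid in arbitrary separable real Hilbert spaces, where one must work with weak cluster points and weak convergence; your proof is elementary, avoids reliance on the literature, and makes explicit the measure-theoretic bookkeeping that the citation hides, at the cost of covering only the finite-dimensional case --- which is all the paper needs.
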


\section{Convergence}\label{sec:convergence}
Throughout the analysis we will use the following two assumptions on the operators of \eqref{eq:zerprob}.
\begin{ass}\label{ass:op}
	For each $i\in\{1,\dots,n\}$, let $R_i$ be $\frac{1}{L}$-cocoercive and $X^\star \neq \emptyset$, i.e., \eqref{eq:zerprob} has at least one solution.
\end{ass}
\begin{ass}\label{ass:grad}
	For each $i\in\{1,\dots,n\}$, let $R_i = \nabla f_i$ for some differentiable function $f_i$ and define $F = \tfrac{1}{n}\sum_{i=1}^n f_i$.
	Furthermore, let \assref{ass:op} hold, i.e., $f_i$ is $L$-smooth and convex and $\argmin F(x)$ exists.
\end{ass}

\subsection{Reformulation}
We begin by formalizing and reformulating \algref{alg:svag} into a more convenient form.
Let $(\Omega, \mathcal{F}, P)$ be the underlying probability space of \algref{alg:svag}.
The index selected at iteration $k$ is then a uniformly distributed random variable $i^k: \Omega \to \{1,\dots,n\}$.
For each $k\in\N$, define the random variable $z^k:\Omega\to\R^{N(n+1)}$ as $z^k = (x^k,y_1^k,\dots ,y_n^k)$ where $x^k$ and $y_i^k$ for $i \in \{1,\dots,n\}$ are the iterates of \algref{alg:svag}.
Let $\mathcal{F}_0 \subset \mathcal{F}_1 \subset \dots$ be a sequence of sub-$\sigma$-algebras of $\mathcal{F}$ such that $z^k$ are $\mathcal{F}_k$-measurable and $i^k$ is independent of $\mathcal{F}_k$.
With the operator $\mathbf{B} : \R^{N(n+1)} \to \R^{2Nn}$ defined as $\mathbf{B}(x,y_1,\dots,y_n) = (R_1x,\dots ,R_nx,y_1,\dots ,y_n)$, one iteration of \algref{alg:svag} can be written as
\begin{align}\label{eq:zupdate}
	z^{k+1} = z^k - (U_{i^k}\otimes I)\mathbf{B}z^k
\end{align}
where $z^0 \in \R^{N(n+1)}$ is given and
\begin{align*}
	U_i =
	\begin{bigmatrix}
		\frac{\lambda}{n}\theta e_i^T & -\frac{\lambda}{n}\theta e_i^T+\frac{\lambda}{n}\mathbf{1}^T \\
		-E_i & E_i
	\end{bigmatrix}
\end{align*}
for all $i\in\{1,\dots,n\}$.
The vector $e_i$ and the matrix $E_i$ are defined in \secref{sec:prel}.

The following lemma characterizes the zeros of $(U_i\otimes I)\mathbf{B}$ and hence the fixed points of \eqref{eq:zupdate} and \algref{alg:svag}.
\begin{lem}\label{lem:zeros}
	Let \assref{ass:op} hold, each $z^\star$ in $Z^\star$ is then a zero of $(U_i\otimes I)\mathbf{B}$ for all $i\in\{1,\dots,n\}$, i.e.
	\begin{align*}
		\forall z^\star \in Z^\star, \forall i \in \{1,\dots ,n\}: \quad 0 = (U_i\otimes I)\mathbf{B}z^\star.
	\end{align*}
	Furthermore, the set $Z^\star$ is closed and convex and $R_ix^\star = R_i\bar{x}^\star$ for all $x^\star,\bar{x}^\star \in X^\star$ and for all $i\in\{1,\dots,n\}$.
\end{lem}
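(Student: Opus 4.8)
The plan is to establish the three claims in an order that respects their dependencies: first the zero property, which is an independent direct computation; then uniqueness of $R_i x^\star$, which rests on cocoercivity; and finally closedness and convexity of $Z^\star$, which I intend to deduce cheaply from uniqueness rather than analyzing $Z^\star$ directly.

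For the zero property I would evaluate $(U_i\otimes I)\mathbf{B}z^\star$ block by block. Since $z^\star = (x^\star, R_1 x^\star,\dots,R_n x^\star)$, the definition of $\mathbf{B}$ gives $\mathbf{B}z^\star = (R_1 x^\star,\dots,R_n x^\star,R_1 x^\star,\dots,R_n x^\star)$, so the two halves of $\mathbf{B}z^\star$ coincide. In the top block row of $U_i$ the innovation-weight terms $\frac{\lambda}{n}\theta e_i^T$ and $-\frac{\lambda}{n}\theta e_i^T$ then act on identical data and cancel, leaving $\frac{\lambda}{n}\mathbf{1}^T$ applied to $(R_j x^\star)_j$, which equals $\frac{\lambda}{n}\sum_{j=1}^n R_j x^\star = 0$ by optimality of $x^\star$. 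In the bottom block row, $-E_i$ and $E_i$ act on the two identical halves and cancel term by term. Hence $(U_i\otimes I)\mathbf{B}z^\star = 0$ for every $i$. This step is routine Kronecker/block bookkeeping; the only thing to watch is pairing the two halves of $\mathbf{B}z^\star$ correctly.

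For uniqueness I would take two solutions $x_1^\star,x_2^\star \in X^\star$. Cocoercivity \eqref{eq:cocoercivity} gives $\inprod{R_i x_1^\star - R_i x_2^\star}{x_1^\star - x_2^\star} \geq \frac{1}{L}\norm{R_i x_1^\star - R_i x_2^\star}^2 \geq 0$ for each $i$. Summing over $i$ and using $\sum_{i=1}^n R_i x_1^\star = \sum_{i=1}^n R_i x_2^\star = 0$ collapses the left-hand sum to $\inprod{0}{x_1^\star - x_2^\star} = 0$. A sum of nonnegative terms equal to zero forces each term to vanish, so $R_i x_1^\star = R_i x_2^\star$ for all $i$; thus $R_i x^\star$ is the same vector for every solution, which is the uniqueness claim.

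Finally, uniqueness identifies $Z^\star$ with the product $X^\star \times \{(R_1 x^\star,\dots,R_n x^\star)\}$, since every element of $Z^\star$ shares the same last $n$ blocks. Because the solution set $X^\star$ is closed and convex (the standard property of zero sets of cocoercive operators recalled in Section \ref{sec:propandass}) and the remaining factor is a single point, $Z^\star$ inherits both properties. I expect the conceptual crux to be the uniqueness argument: it is precisely what lets the convexity claim reduce to a property of $X^\star$, since without it $Z^\star$ is only the graph of the nonlinear map $x \mapsto (x,R_1 x,\dots,R_n x)$ and need not be convex. The zero-property computation, though the longest and most error-prone, is otherwise mechanical.
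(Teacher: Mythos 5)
Your proposal is correct and follows essentially the same route as the paper's proof: the zero property by direct substitution of $z^\star$ into the update (the paper simply states this follows from the definition), uniqueness of $R_i x^\star$ via the same summed cocoercivity argument forcing each nonnegative term to vanish, and closedness/convexity of $Z^\star$ by identifying it as the Cartesian product of $X^\star$ with the singletons $\{R_i x^\star\}$. The only difference is that you spell out the block computation and the reduction to $X^\star$ in more detail than the paper does.
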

\begin{proof}[Proof of \lemref{lem:zeros}]
	The zero statement, $0 = (U_i\otimes I)\mathbf{B}z^\star$, follows from definition of $z^\star$.
	For closedness and convexity of $Z^\star$, we first prove that $R_ix^\star$ is unique for each $i\in\{1,\dots,n\}$.
	Taking $x,y \in X^\star$, which implies ${\textstyle\sum_{i=1}^n} R_ix = {\textstyle\sum_{i=1}^n}R_iy = 0$, and using cocoercivity \eqref{eq:cocoercivity} of each $R_i$ gives
	\begin{gather*}
		0
		= \inprod{{\textstyle\sum_{i=1}^n} R_ix - {\textstyle\sum_{i=1}^n} R_iy}{x-y}
		= {\textstyle\sum_{i=1}^n} \inprod{R_ix - R_iy}{x-y} \\
		\geq {\textstyle\sum_{i=1}^n} \tfrac{1}{L}\norm{R_ix - R_iy}^2 \geq 0,
	\end{gather*}
	hence must $R_ix = R_iy$ for all $i\in\{1,\dots,n\}$.
	The set $Z^\star$ is a Cartesian product of $X^\star$ and the points $r_i = R_ix^\star$ for $i\in\{1,\dots,n\}$ and any $x^\star\in X^\star$.
	A set consisting of only one point is closed and convex and $X^\star$ is closed and convex since $\frac{1}{n}\sum_{i=1}^nR_i$ is cocoercive \cite[Proposition 23.39]{bauschke_convex_2017}, hence is $Z^\star$ closed and convex.
\end{proof}

The operator $\mathbf{B}$ in the reformulated algorithm can be used to enforce the following property on the sequence $(z^k)_{k\in\N}$.
\begin{lem}\label{lem:clusterpoints}
	Let $(\Omega, \mathcal{F}, P)$ be a probability space and $(z^k)_{k\in\N}$ be a sequence of random variables $z^k: \Omega \to \R^{N(n+1)}$.
	If $\mathbf{B}z^k \to \mathbf{B}z^\star$ a.s.\ where $z^\star \in Z^\star$, then any cluster point of $(z^k)_{k\in\N}$ will almost surely be in $Z^\star$.
\end{lem}
\begin{proof}[Proof of \lemref{lem:clusterpoints}]
	Let $z$ be a cluster point of $(z^k)_{k\in\N}$.
	Take an $\omega \in \Omega$ such that $\mathbf{B}z^k(\omega) \to \mathbf{B}z^\star$.
	For this $\omega$ and for all $k\in\N$, we define the realizations of $z$ and $z^k$ as
	\begin{align*}
		z(\omega) = (\bar{x},\bar{y}_1,\dots,\bar{y}_n)
		,\quad
		z^k(\omega) = (\bar{x}^k,\bar{y}_1^k,\dots,\bar{y}_n^k)
	\end{align*}
	where $\bar{x},\bar{y}_1,\dots,\bar{y}_n\in\R^N$ and $\bar{x}^k,\bar{y}_1^k,\dots,\bar{y}_n^k\in\R^N$ for all $k\in\N$.

	Since $\mathbf{B}\bar{z}^k \to \mathbf{B}z^\star$ we directly have $\bar{y}_i^k \to R_i x^\star$ for $x^\star\in X^\star$ and hence must $\bar{y}_i = R_ix^\star$ for all $i\in\{1,\dots,n\}$.
	Note, $R_ix^\star$ is independent of which $x^\star\in X^\star$ was chosen, see \lemref{lem:zeros}.
	Furthermore, $\mathbf{B}\bar{z}^k \to \mathbf{B}\bar{z}^\star$ implies that $R_i\bar{x}^k \to R_i x^\star$ for all $i\in\{1,\dots,n\}$.
	Let $(\bar{x}^{k(l)})_{l\in\N}$ be a subsequence converging to $\bar{x}$, then
	\begin{gather*}
		\norm{{\textstyle\frac{1}{n}\sum_{i=1}^n R_i}\bar{x}}
		\leq \norm{{\textstyle\frac{1}{n}\sum_{i=1}^nR_i}\bar{x}^{k(l)} - {\textstyle\frac{1}{n}\sum_{i=1}^nR_i}\bar{x}} + \norm{{\textstyle\frac{1}{n}\sum_{i=1}^nR_i}\bar{x}^{k(l)}} \\
		\leq L\norm{\bar{x}^{k(l)} - \bar{x}} + \norm{{\textstyle\frac{1}{n}\sum_{i=1}^nR_i}\bar{x}^{k(l)}} \to \norm{{\textstyle\frac{1}{n}\sum_{i=1}^nR_i}x^\star} = 0
	\end{gather*}
	as $l \to \infty$ where $L$-Lipschitz continuity of $\frac{1}{n}\sum_{i=1}^nR_i$ was used.
	This concludes that $\bar{x} \in X^\star$ and since $\bar{y}_i = R_ix^\star = R_i\bar{x}$ for all $i\in\{1,\dots,n\}$ by \lemref{lem:zeros}, we have that $z(\omega) \in Z^\star$.
	Since this hold for any $\omega$ such that $\mathbf{B}z^k(\omega) \to \mathbf{B}z^\star$ and the set in $\mathcal{F}$ of all such $\omega$ have probability one due to the almost sure convergence of $\mathbf{B}z^k \to \mathbf{B}z^\star$, we have $z\in Z^\star$ almost surely.
\end{proof}

The reformulation \eqref{eq:zupdate} further allows us to concisely formulate two Lyapunov inequalities.
\begin{lem}\label{lem:fejer-cond}
	Let \assref{ass:op} hold, the update \eqref{eq:zupdate} then satisfies
	\begin{align*}
		&\Exp[\norm{z^{k+1} - z^\star}_{H\otimes I}^2 | \mathcal{F}_k]\\
		&\quad\leq \norm{z^k - z^\star}_{H\otimes I}^2 - \norm{\mathbf{B}z^k - \mathbf{B}z^\star}_{(2 M  - \Exp[U_{i^k}^THU_{i^k}] - \xi I) \otimes I }^2 \\
		&\qquad- {\xi n L}\inprod{Rx^k}{x^k - x^\star}
	\end{align*}
	for all $k\in\N$ and $\xi \in [0,\frac{2\lambda}{nL}]$,
	where matrices $H$ and $M$ are given by
	\begin{align*}
		H = \begin{bigmatrix}
			1 & - \frac{\lambda}{n}(n-\theta) \mathbf{1}^T \\
			-\frac{\lambda}{n}(n-\theta)\mathbf{1} & \frac{\lambda}{L} I + \frac{\lambda^2}{n^2}(n-\theta)^2 \mathbf{11}^T
		\end{bigmatrix}
	\end{align*}
	and
	\begin{align*}
		M
		=
		\begin{bigmatrix}
			2 & -1 \\
			-1 & 2
		\end{bigmatrix} \otimes \frac{1}{2n} \frac{\lambda}{L} I
		-
		\begin{bigmatrix}
			0 & 1\\
			1 & 0
		\end{bigmatrix} \otimes \frac{\lambda^2}{2 n^2}(n-\theta)\mathbf{11}^T.
	\end{align*}
\end{lem}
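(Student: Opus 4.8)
The plan is to expand the $H\otimes I$-weighted squared distance along the update \eqref{eq:zupdate}, pass to the conditional expectation over the uniformly sampled index, and then spend cocoercivity on the single genuinely nonlinear term that survives. First I would invoke \lemref{lem:zeros}, which gives $(U_i\otimes I)\mathbf{B}z^\star=0$, to write the update as a recursion centered at the solution,
\begin{align*}
z^{k+1}-z^\star = (z^k-z^\star) - (U_i\otimes I)(\mathbf{B}z^k-\mathbf{B}z^\star).
\end{align*}
Abbreviating $w=z^k-z^\star$ and $v=\mathbf{B}z^k-\mathbf{B}z^\star$, expanding the weighted square and using the Kronecker identities \eqref{eq:kronprop} together with the symmetry of $H$ yields the exact equality
\begin{align*}
\norm{z^{k+1}-z^\star}_{H\otimes I}^2 = \norm{w}_{H\otimes I}^2 - 2\inprod{w}{((HU_i)\otimes I)v} + \norm{v}_{(U_i^T H U_i)\otimes I}^2 .
\end{align*}
Taking $\Exp[\,\cdot\,|\mathcal{F}_k]$ and noting that $w$ and $v$ are $\mathcal{F}_k$-measurable while $i$ is uniform leaves the first term unchanged, replaces $U_i^THU_i$ by its conditional expectation in the last term, and turns the middle term into $-2\inprod{w}{((H\bar U)\otimes I)v}$ with $\bar U=\tfrac1n\sum_{i=1}^n U_i$.

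The heart of the argument is the cross term. Writing $w=(a,b_1,\dots,b_n)$ and $v=(c_1,\dots,c_n,b_1,\dots,b_n)$ with $a=x^k-x^\star$, $b_i=y_i^k-R_ix^\star$ and $c_i=R_ix^k-R_ix^\star$, I would compute $H\bar U$ blockwise. The off-diagonal entries of $H$ are engineered precisely so that the coefficient multiplying the $\inprod{a}{b_j}$ pairings vanishes, while the coefficient multiplying $\inprod{a}{c_j}$ collapses to $\tfrac{\lambda}{n}$; every remaining pairing couples only blocks of $v$ and is therefore a genuine quadratic form in $v$. Hence
\begin{align*}
-2\inprod{w}{((H\bar U)\otimes I)v} = -\tfrac{2\lambda}{n}\sum_{j=1}^n\inprod{a}{c_j} + \mathcal{Q}(v),
\end{align*}
where $\mathcal{Q}(v)$ is a quadratic form in $v$. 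The inner products $\inprod{a}{c_j}=\inprod{x^k-x^\star}{R_jx^k-R_jx^\star}$ are the only contributions not expressible through $v$, and they are exactly what cocoercivity \eqref{eq:cocoercivity} controls.

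I would then split $\tfrac{2\lambda}{n}=(\tfrac{2\lambda}{n}-\xi L)+\xi L$. The $\xi L$-part is retained unaltered and reassembles, via $\inprod{Rx^k}{x^k-x^\star}=\tfrac1n\sum_j\inprod{a}{c_j}$, into the explicit term $-\xi nL\inprod{Rx^k}{x^k-x^\star}$. The other part has nonnegative coefficient $\tfrac{2\lambda}{n}-\xi L$ exactly when $\xi\le\tfrac{2\lambda}{nL}$, so applying $\inprod{a}{c_j}\ge\tfrac1L\norm{c_j}^2$ bounds it above by $-(\tfrac{2\lambda}{nL}-\xi)\sum_j\norm{c_j}^2$, converting it into a further negative quadratic in $v$.

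Finally I would collect all quadratic-in-$v$ contributions, namely $\mathcal{Q}(v)$, the cocoercivity output $-(\tfrac{2\lambda}{nL}-\xi)\sum_j\norm{c_j}^2$ and the already-present $\norm{v}_{\Exp[U_i^THU_i|\mathcal{F}_k]\otimes I}^2$, and show they combine into $-\norm{v}_{(2M-\Exp[U_i^THU_i|\mathcal{F}_k]-\xi I)\otimes I}^2$, using $\xi\ge0$ to discard the leftover nonnegative $y$-block slack. This is where the matrix $M$ is defined so that the underlying identity $\mathcal{Q}(v)-\tfrac{2\lambda}{nL}\sum_j\norm{c_j}^2=-\norm{v}_{2M\otimes I}^2$ holds. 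I expect this last bookkeeping to be the main obstacle: it is the reverse-engineered identity that forces the specific forms of $H$ and $M$, and verifying it requires careful manipulation of the $e_i$, $E_i$, $\mathbf{1}$ and $\mathbf{11}^T$ blocks under the Kronecker structure, a computation I would relegate to Section~\ref{sec:matrixids}.
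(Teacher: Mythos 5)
Your proposal is correct and follows essentially the same route as the paper's proof: expand the $H\otimes I$-weighted square around $z^\star$ using \lemref{lem:zeros}, reduce the cross term to the explicitly computed lower-block-triangular matrix $H\Exp[U_i|\mathcal{F}_k]$ (whose zero top-right block kills the $\inprod{a}{b_j}$ pairings and leaves coefficient $\tfrac{\lambda}{n}$ on the $\inprod{a}{c_j}$ pairings), split off the $\xi$-weighted $\inprod{Rx^k}{x^k-x^\star}$ term, apply cocoercivity under the condition $\xi\le\tfrac{2\lambda}{nL}$, and symmetrize the resulting quadratic form in $\mathbf{B}z^k-\mathbf{B}z^\star$ to obtain $M$, using $\xi\ge 0$ to absorb the slack into $-\xi I$. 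Even the bookkeeping you defer is deferred by the paper to the same matrix-identity appendix.
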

\begin{lem}\label{lem:funcdecrease}
	Let \assref{ass:grad} hold, the update \eqref{eq:zupdate} then satisfies
	\begin{align*}
		\Exp[F((K\otimes I) z^{k+1})|\mathcal{F}_k] \leq F((K\otimes I) z^k) - \norm{\mathbf{B}z^k - \mathbf{B}z^\star}^2_{\frac{1}{2}S\otimes I}
	\end{align*}
	for all $k\in\N$, where $K = \begin{bigmatrix} 1 & \frac{\lambda}{n}\mathbf{1}^T \end{bigmatrix}$ and
	\begin{align*}
		S
		&=
		\begin{bigmatrix}
			2 & -1 \\
			-1 & 0
		\end{bigmatrix} \otimes (\theta - 1)\frac{\lambda}{n^3} \mathbf{11}^T
		-
		\begin{bigmatrix}
			1  & -1  \\
			-1  & 1
		\end{bigmatrix} \otimes (\theta - 1)^2\frac{L\lambda^2}{n^3}  I
		\\
		&\quad+
		\begin{bigmatrix}
			0 & 1 \\
			1 & 0
		\end{bigmatrix} \otimes \frac{\lambda}{n^2} \mathbf{11}^T.
	\end{align*}
\end{lem}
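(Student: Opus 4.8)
The plan is to reduce the claim to a one-step descent inequality for the \emph{shifted} iterate $w^k \coloneqq (K\otimes I)z^k = x^k + \tfrac{\lambda}{n}\sum_{j=1}^n y_j^k$ and then to recognize the resulting bound as the quadratic form attached to $S$. Throughout I would use that $F$ is convex and $L$-smooth and that $\sum_{i} \nabla f_i(x^\star) = 0$ at any solution.

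First I would compute how $w^k$ evolves. Applying $(K\otimes I)$ to \eqref{eq:zupdate} gives $w^{k+1} = w^k - (KU_i\otimes I)\mathbf{B}z^k$, and a direct block multiplication using $\mathbf{1}^T E_i = e_i^T$ yields $KU_i = \tfrac{\lambda}{n}\begin{bmatrix} (\theta-1)e_i^T & -(\theta-1)e_i^T + \mathbf{1}^T\end{bmatrix}$. Hence $w^{k+1} - w^k = -\lambda g^k$ with $g^k = \tfrac{\theta-1}{n}(\nabla f_i(x^k) - y_i^k) + \tfrac1n\sum_{j} y_j^k$, and moreover $w^{k+1} - x^k = -\tfrac{\lambda(\theta-1)}{n}(\nabla f_i(x^k) - y_i^k)$. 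These two identities are the only place where the algebra of $U_i$ and $K$ enters, and they isolate the ``innovation-only'' displacement $w^{k+1}-x^k$.

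The core step is a smoothness/convexity estimate anchored at $x^k$ rather than at $w^k$. The descent lemma \eqref{eq:descent} for $F$ bounds $F(w^{k+1}) \le F(x^k) + \inprod{\nabla F(x^k)}{w^{k+1} - x^k} + \tfrac{L}{2}\norm{w^{k+1}-x^k}^2$, while convexity of $F$ gives $F(x^k) \le F(w^k) + \inprod{\nabla F(x^k)}{x^k - w^k}$; adding them cancels $F(x^k)$ and collapses the two inner products into $\inprod{\nabla F(x^k)}{w^{k+1}-w^k}$, leaving
\[
F(w^{k+1}) \le F(w^k) + \inprod{\nabla F(x^k)}{w^{k+1}-w^k} + \tfrac{L}{2}\norm{w^{k+1}-x^k}^2 .
\]
Taking $\Exp[\,\cdot\mid\mathcal{F}_k]$ and inserting the identities above, the linear term becomes $-\lambda\inprod{\nabla F(x^k)}{\Exp[g^k\mid\mathcal{F}_k]}$ with $\Exp[g^k\mid\mathcal{F}_k] = \tfrac{\theta-1}{n}(\nabla F(x^k) - \tfrac1n\sum_{j} y_j^k) + \tfrac1n\sum_{j} y_j^k$, and the quadratic term becomes $\tfrac{(\theta-1)^2 L\lambda^2}{2n^3}\sum_{i}\norm{\nabla f_i(x^k)-y_i^k}^2$.

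It then remains to verify that this right-hand side equals $F(w^k) - \norm{\mathbf{B}z^k - \mathbf{B}z^\star}_{\frac12 S\otimes I}^2$. I would expand the quadratic form using the $2\times2$ block structure of $S$ together with $a_i \coloneqq \nabla f_i(x^k) - \nabla f_i(x^\star)$ and $b_i \coloneqq y_i^k - \nabla f_i(x^\star)$, so that $a_i - b_i = \nabla f_i(x^k) - y_i^k$, $\sum_i a_i = n\nabla F(x^k)$, and $\sum_i b_i = \sum_{j} y_j^k$. The $I$-blocks of $S$ then collect into $-(\theta-1)^2\tfrac{L\lambda^2}{n^3}\sum_i\norm{a_i-b_i}^2$ and the $\mathbf{11}^T$-blocks into $2\lambda\inprod{\nabla F(x^k)}{\Exp[g^k\mid\mathcal{F}_k]}$, reproducing the expectation bound exactly after dividing by two. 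The main obstacle is conceptual rather than computational: the descent lemma naturally produces $\nabla F(w^k)$, whereas $S$ is built from gradients evaluated at $x^k$; the fix is precisely the re-anchoring at $x^k$ above, which trades $\norm{w^{k+1}-w^k}^2$ for the innovation-only term $\norm{w^{k+1}-x^k}^2$ and so produces the $(\theta-1)^2$ scaling visible in $S$. The remaining work is the routine bookkeeping of the Kronecker block expansion, which I would defer (as the paper does) to the matrix-identity section.
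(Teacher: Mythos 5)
Your proof is correct and takes essentially the same route as the paper: the same identity for the shifted iterate, $(K\otimes I)z^{k+1} = x^k - \tfrac{\lambda(\theta-1)}{n}(\nabla f_i(x^k)-y_i^k)$, the descent lemma anchored at $x^k$, and convexity to bridge $F(x^k)$ and $F((K\otimes I)z^k)$. The only difference is presentational: you work with explicit vectors and match the result to the quadratic form of $S$ at the end, whereas the paper keeps everything as Kronecker quadratic forms (writing $S = S_L + S_C$) and defers the block algebra to its matrix-identity section.
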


\begin{proof}[Proof of \lemref{lem:fejer-cond}]
	Take $k\in\N$, note that since $U_{i^k}$ is independent of $\mathcal{F}_k$ and $z_k$ is $\mathcal{F}_k$-measurable we have
	\begin{align*}
		&\Exp[\inprod{(U_{i^k}\otimes I)  (\mathbf{B}z^k - \mathbf{B}z^\star)}{z^k-z^\star}_{H\otimes I} |\mathcal{F}_k] \\
		&\quad = \inprod{(H\Exp [U_{i^k}]\otimes I)  (\mathbf{B}z^k - \mathbf{B}z^\star)}{z^k-z^\star}.
	\end{align*}
	The matrix $H\Exp [U_{i^k}]$ is given by
	\begin{align*}
		H\Exp [U_{i^k}]
		=
		\begin{bigmatrix}
			\frac{\lambda}{n}\mathbf{1}^T & 0 \\
			- \frac{\lambda^2}{n^2}(n-\theta)\mathbf{11}^T - \frac{\lambda}{nL}I & \frac{\lambda}{nL}I
		\end{bigmatrix},
	\end{align*}
	see the supplementary material for verification of this and other matrix identities.
	We also note that
	\begin{align*}
		\inprod{Rx^k - Rx^\star}{x^k -x^\star}
		=
		\inprod{(\begin{bigmatrix} \tfrac{1}{n}\mathbf{1}^T & 0 \\ 0 & 0 \end{bigmatrix} \otimes I) (\mathbf{B}z^k - \mathbf{B}z^\star)}{z^k-z^\star}.
	\end{align*}
	Taking $\xi \in [0,\frac{2\lambda}{nL}]$ and putting these two expression together yield
	\begin{align*}
		&\Exp[\inprod{(U_{i^k}\otimes I)  (\mathbf{B}z^k - \mathbf{B}z^\star)}{z^k-z^\star}_{H\otimes I} |\mathcal{F}_k] - \tfrac{\xi n L}{2}\inprod{Rx^k - Rx^\star}{x^k -x^\star}\\
		&\quad=\inprod{(
			\begin{bigmatrix}
				(\frac{\lambda}{n} -\frac{\xi L}{2})\mathbf{1}^T & 0 \\
				- \frac{\lambda^2}{n^2}(n-\theta)\mathbf{11}^T - \frac{\lambda}{nL}I & \frac{\lambda}{nL}I
			\end{bigmatrix}
		\otimes I) (\mathbf{B}z^k - \mathbf{B}z^\star)}{z^k-z^\star}.
	\end{align*}
	Using $\frac{1}{L}$-cocoercivity of $R_i$ for each $i\in\{1,\dots,n\}$ gives
	\begin{align*}
		&\Exp[\inprod{(U_{i^k}\otimes I)  (\mathbf{B}z^k - \mathbf{B}z^\star)}{z^k-z^\star}_{H\otimes I} |\mathcal{F}_k] - \tfrac{\xi n L}{2}\inprod{Rx^k - Rx^\star}{x^k -x^\star}\\
		&\quad\geq \inprod{(\begin{bigmatrix} (\frac{\lambda}{nL} -\frac{\xi}{2}) I & 0 \\ - \frac{\lambda^2}{n^2}(n-\theta)\mathbf{11}^T - \frac{\lambda}{nL}I & \frac{\lambda}{nL}I \end{bigmatrix} \otimes I) (\mathbf{B}z^k - \mathbf{B}z^\star)}{\mathbf{B}z^k-\mathbf{B}z^\star}
	\end{align*}
	Setting
	\begin{align*}
		\bar{M} =
		\begin{bigmatrix}
			\frac{\lambda}{nL} I & 0 \\
			- \frac{\lambda^2}{n^2}(n-\theta)\mathbf{11}^T - \frac{\lambda}{nL}I & \frac{\lambda}{nL}I
		\end{bigmatrix}
	\end{align*}
	gives
	\begin{align*}
		&\Exp[\inprod{(U_{i^k}\otimes I)  (\mathbf{B}z^k - \mathbf{B}z^\star)}{z^k-z^\star}_{H\otimes I} |\mathcal{F}_k] - \tfrac{\xi n L}{2}\inprod{Rx^k - Rx^\star}{x^k -x^\star}\\
		&\quad\geq \inprod{(\bar{M}\otimes I)(\mathbf{B}z^k - \mathbf{B}z^\star)}{\mathbf{B}z^k-\mathbf{B}z^\star} \\
		&\qquad- \inprod{(\begin{bigmatrix} \frac{\xi}{2} I & 0 \\ 0 & 0 \end{bigmatrix} \otimes I)(\mathbf{B}z^k - \mathbf{B}z^\star)}{\mathbf{B}z^k-\mathbf{B}z^\star}\\
		&\quad\geq \norm{\mathbf{B}z^k - \mathbf{B}z^\star}^2_{\frac{1}{2}(\bar{M} + \bar{M}^T) \otimes I} - \tfrac{\xi}{2}\norm{\mathbf{B}z^k-\mathbf{B}z^\star}^2 \\
		&\quad= \norm{\mathbf{B}z^k - \mathbf{B}z^\star}^2_{(M - \frac{\xi}{2}I)\otimes I}
	\end{align*}
	where $M = \frac{1}{2}(\bar{M} + \bar{M}^T)$ is the matrix in the Lemma.
	Finally, using this inequality and $0 = (U_{i^k}\otimes I)\mathbf{B}z^\star$ from \lemref{lem:zeros} gives
	\begin{align*}
		&\Exp[\norm{z^{k+1} - z^\star}_{H\otimes I}^2|\mathcal{F}_k] \\
		&\quad= \Exp[\norm{\big(z^k - (U_{i^k}\otimes I)\mathbf{B}z^k\big) - \big(z^\star - (U_{i^k}\otimes I)\mathbf{B}z^\star\big) }_{H\otimes I}^2 |\mathcal{F}_k]\\
		&\quad= \norm{z^k - z^\star}_{H\otimes I}^2 + \Exp[\norm{(U_{i^k}\otimes I)(\mathbf{B}z^k - \mathbf{B}z^\star)}_{H\otimes I}^2|\mathcal{F}_k] \\
		&\quad\quad- 2\Exp[\inprod{(U_{i^k}\otimes I)(\mathbf{B}z^k - \mathbf{B}z^\star)}{z^k-z^\star}_{H\otimes I} |\mathcal{F}_k]\\
		&\quad\leq \norm{z^k - z^\star}_{H\otimes I}^2 + \norm{\mathbf{B}z^k - \mathbf{B}z^\star}_{\Exp[U_{i^k}^THU_{i^k} ] \otimes I}^2 \\
		&\quad\quad - \norm{\mathbf{B}z^k - \mathbf{B}z^\star}^2_{(2M - \xi I)\otimes I} - {\xi n L}\inprod{Rx^k - Rx^\star}{x^k - x^\star} \\
		&\quad= \norm{z^k - z^\star}_{H\otimes I}^2 - \norm{\mathbf{B}z^k - \mathbf{B}z^\star}_{(2 M  - \Exp[U_{i^k}^THU_{i^k}] - \xi I) \otimes I }^2 \\
		&\qquad- {\xi n L}\inprod{Rx^k}{x^k - x^\star}.
	\end{align*}
\end{proof}

\begin{proof}[Proof of \lemref{lem:funcdecrease}]
	Take $k\in\N$ and note that
	\begin{align*}
		(K\otimes I)z^{k+1} =(K\otimes I)(z^k - (U_{i^k}\otimes I)\mathbf{B}z^k) = x^k - (Q_{i^k} \otimes I)\mathbf{B}z^k
	\end{align*}
	where $Q_{i^k} = \frac{\lambda}{n} \begin{bigmatrix} (\theta - 1)e_{i^k}^T & -(\theta - 1)e_{i^k}^T \end{bigmatrix}$.
	Furthermore, with $G = \tfrac{1}{n}\begin{bmatrix} \mathbf{1}^T & 0 \end{bmatrix}$, we have $\nabla F(x^k) = (G \otimes I)\mathbf{B}z^k$.
	From the definition of $z^\star$ we have $0 =(G\otimes I)\mathbf{B}z^\star = (Q_{i^k} \otimes I)\mathbf{B}z^\star$.
	Using $L$-smoothness, \eqref{eq:descent}, of $F$ yields
	\begin{align*}
		&\Exp[F((K\otimes I)z^{k+1})|\mathcal{F}_k] \\
		&\quad= \Exp[F(x^k - (Q_{i^k} \otimes I)\mathbf{B}z^k)|\mathcal{F}_k]\\
		&\quad\leq F(x^k) - \inprod{\nabla F(x^k)}{(\Exp[Q_{i^k}] \otimes I)\mathbf{B}z^k} + {\textstyle\frac{L}{2}}\Exp[\norm{(Q_{i^k} \otimes I)\mathbf{B}z^k}^2|\mathcal{F}_k] \\
		&\quad= F(x^k) - \inprod{(G\otimes I)\mathbf{B}z^k}{(\Exp[Q_{i^k}] \otimes I)\mathbf{B}z^k} + \norm{\mathbf{B}z^k}_{\frac{L}{2}\Exp[Q_{i^k}^TQ_{i^k}] \otimes I}^2 \\
		&\quad= F(x^k) - \norm{\mathbf{B}z^k}_{\frac{1}{2}\Exp[Q_{i^k}^T G + G^T Q_{i^k}]\otimes I}^2 + \norm{\mathbf{B}z^k}_{\frac{L}{2}\Exp[Q_{i^k}^TQ_{i^k}] \otimes I}^2 \\
		&\quad= F(x^k) - \norm{\mathbf{B}z^k - \mathbf{B}z^\star}_{\frac{1}{2}S_L \otimes I}^2
	\end{align*}
	where $S_L = \Exp[Q_{i^k}^T G + G^T Q_{i^k} - L Q_{i^k}^T Q_{i^k}]$.

	With $D = \begin{bmatrix} 0 & \mathbf{1}^T \end{bmatrix}$ we have  $(K\otimes I)z^k = x^k + \frac{\lambda}{n}(D\otimes I)\mathbf{B}z^k$.
	Using the first order convexity condition on $F$ and $0 = (D\otimes I)\mathbf{B}z^\star = (G\otimes I)\mathbf{B}z^\star$ yields
	\begin{equation}\label{eq:fvalcomb-cvx}
		\begin{aligned}
			F((K\otimes I)z^k)
			&= F(x^k + \tfrac{\lambda}{n}(D \otimes I)\mathbf{B}z^k) \\
			&\geq F(x^k) + \inprod{\nabla F(x^k)}{ \tfrac{\lambda}{n}(D \otimes I)\mathbf{B}z^k} \\
			&= F(x^k) + \inprod{(G\otimes I) \mathbf{B}z^k}{ \tfrac{\lambda}{n}(D \otimes I)\mathbf{B}z^k} \\
			&= F(x^k) + \norm{\mathbf{B}z^k}_{\frac{1}{2}\frac{\lambda}{n}(D^TG + G^TD)\otimes I}^2 \\
			&= F(x^k) + \norm{\mathbf{B}z^k - \mathbf{B}z^\star}_{\frac{1}{2}S_C \otimes I}^2
		\end{aligned}
	\end{equation}
	where $S_C = \frac{\lambda}{n}(D^TG + G^TD)$.
	Combining these two inequalities gives
	\begin{align*}
		\Exp[F((K\otimes I) z^{k+1})|\mathcal{F}_k] \leq F((K\otimes I) z^k) - \norm{\mathbf{B}z^k - \mathbf{B}z^\star}^2_{\frac{1}{2}S\otimes I}
	\end{align*}
	where  $S = S_L + S_C$.
\end{proof}

\subsection{Convergence Theorems}\label{sec:thm}

We are now ready to state the main convergence theorems for SVAG.
They are stated with the notation from \algref{alg:svag} but are proved at the end of this section with the help of the reformulation in \eqref{eq:zupdate} and the lemmas above.
\begin{thm}\label{thm:conv-coco}
	For all $i\in\{1,\dots,n\}$, let $(x^k)_{k\in\N}$ and $(y_i^k)_{k\in\N}$ be the sequences generated by \algref{alg:svag}.
	If \assref{ass:op} hold and the step-size, $\lambda > 0$, and innovation weight, $\theta \in\R$, satisfy
	\begin{align*}
		\frac{1}{L(2 + |n - \theta|)} > \lambda,
	\end{align*}
	then $x^k \rightarrow x^\star$ and $y_i^k \rightarrow R_i x^\star$ almost surely for all $i\in\{1,\dots,n\}$, where $x^\star$ is a solution to \eqref{eq:zerprob}.
	For all $i\in\{1,\dots,n\}$, the residuals converge a.s.\ as
	\begin{align*}
		\min_{k \in \{0,\dots,t\}} \Exp[\norm{R_ix^k - R_ix^\star}^2] &\leq \tfrac{n}{\lambda(L^{-1} - \lambda c)} \tfrac{1}{t+1} C_R, \\
		\min_{k \in \{0,\dots,t\}} \Exp[\norm{y_i^k - R_ix^\star}^2] &\leq \tfrac{n}{\lambda(L^{-1} - \lambda c)} \tfrac{1}{t+1} C_R
	\end{align*}
	where $c = 2 + |n-\theta|$ and
	\begin{align*}
		C_R &= \min_{x\in X^\star} \norm{x^0 - x}^2 + \tfrac{\lambda}{L}{\textstyle\sum_{i=1}^n}\norm{y_i^0 - R_ix^\star}^2 + \lambda^2(n-\theta)^2\norm{\tfrac{1}{n}{\textstyle\sum_{i=1}^n}y_i^0}^2 \\
		&\qquad\quad\;\; - 2\lambda(n-\theta)\inprod{x^0-x}{\tfrac{1}{n}{\textstyle\sum_{i=1}^n}y_i^0 }
	\end{align*}
	for any $x^\star \in X^\star$.
\end{thm}

\begin{thm}\label{thm:conv-func}
	For all $i\in\{1,\dots,n\}$, let $(x^k)_{k\in\N}$ and $(y_i^k)_{k\in\N}$ be the sequences generated by \algref{alg:svag}.
	If \assref{ass:grad} hold and the step-size, $\lambda > 0$, and innovation weight, $\theta \in [0,n]$, satisfy
	\begin{align*}
		\frac{1}{L}\frac{1}{2 + (n-\theta)\tfrac{\theta-1}{n}\big(\tfrac{\theta - 1}{n} - 1 + \tfrac{\theta - 1}{|\theta - 1|} \sqrt{2}\big)} > \lambda,
	\end{align*}
	then $x^k \rightarrow x^\star$ and $y_i^k \rightarrow \nabla f_i(x^\star)$ almost surely, where $x^\star$ is a solution to \eqref{eq:zerprob}.
	For all $i\in\{1,\dots,n\}$, the residuals converge a.s.\ as
	\begin{align*}
		\min_{k \in \{0,\dots,t\}} \Exp[\norm{\nabla f_i(x^k) - \nabla f_i(x^\star)}^2] &\leq \tfrac{n}{\lambda(L^{-1} - \lambda c)} \tfrac{1}{t+1} (C_R + C_F), \\
		\min_{k \in \{0,\dots,t\}} \Exp[\norm{y_i^k - \nabla f_i(x^\star)}^2] &\leq \tfrac{n}{\lambda(L^{-1} - \lambda c)} \tfrac{1}{t+1} (C_R + C_F), \\
		\min_{k \in \{0,\dots,t\}} \Exp[F(x^k) - F(x^\star)] &\leq \tfrac{1}{\lambda(1 - L\lambda c)} \tfrac{1}{t+1} (C_R + C_F)
	\end{align*}
	where
	\begin{align*}
		c &= 2 + (n-\theta)\tfrac{\theta-1}{n}\big(\tfrac{\theta - 1}{n} - 1 + \tfrac{\theta-1}{|\theta-1|} \sqrt{2}\big), \\
		C_R &= \min_{x\in X^\star} \norm{x^0 - x}^2 + \tfrac{\lambda}{L}{\textstyle\sum_{i=1}^n}\norm{y_i^0 - R_ix^\star}^2 + \lambda^2(n-\theta)^2\norm{\tfrac{1}{n}{\textstyle\sum_{i=1}^n}y_i^0}^2 \\
		&\qquad\quad\;\; - 2\lambda(n-\theta)\inprod{x^0-x}{\tfrac{1}{n}{\textstyle\sum_{i=1}^n}y_i^0 }, \\
		C_F &= 2\lambda(n-\theta)\big(F(x^0 + \tfrac{\lambda}{n}{\textstyle\sum_{i=1}^n} y_i^0) - F(x^\star) \big)
	\end{align*}
	for any $x^\star \in X^\star$.
\end{thm}

Both \thmref{thm:conv-coco} and \ref{thm:conv-func} give the step-size condition $\lambda\in(0,\tfrac{1}{2L})$ for the SAGA special case, i.e.,  $\theta =n$.
This is the same as the largest upper bound found in the literature \cite{defazio_saga_2014} and appears to be tight \cite{morinSamplingUpdateFrequencies2020}.
\thmref{thm:conv-func} also give this step-size condition when $\theta = 1$, i.e., SAG in the optimization case.
This bound improves on upper bound of $\frac{1}{16L} \leq \lambda$ presented in \cite{schmidt_minimizing_2017}.

In the cocoercive operator setting with $\theta \neq n$, \thmref{thm:conv-coco} gives a step-size condition that scales with $n^{-1}$.
This step-size scaling is significantly worse compared to the gradient case in \thmref{thm:conv-func} in which the step-size's dependence on $n$ is $\mathcal{O}(1)$ for all $\theta$.
This difference is indeed real and not an artifact of the analysis since we in \secref{sec:num-exp} present a problem for which the cocoercivity result appears to be tight.
A consequence of this unfavorable step-size scaling in the operator setting is slow convergence.
There is therefore little reason to use anything else than $\theta=n$ in SVAG when $R_i$ is not a gradient of a smooth function for all $i\in\{1,\dots,n\}$.

The rates of \thmref{thm:conv-coco} and \ref{thm:conv-func} are of $\mathcal{O}(\frac{1}{t+1})$ type with two sets of multiplicative factors.
One factor which only depend on the algorithm parameters, $\frac{n}{\lambda(L^{-1} - \lambda c)}$, and one set which depend on how the algorithm initialization relates to the solution set, $C_R$ and $C_R + C_F$.
The initialization dependent factors also depend on the algorithm parameters, but, since knowing the exact dependency requires knowing the solution set, we will not attempt to tune the parameters to decrease this factor.
Only considering the first factor, the rate becomes better if $c$ is decreased and, since $c$ is independent of $\lambda$, the best choice of step-size is $\lambda = (2Lc)^{-1}$.
This means that $\lambda = (4L)^{-1}$ and $\theta = n$ are the best parameter choices in the cocoercive operator setting.
In the optimization case the best step-size is also $\lambda = (4L)^{-1}$ but the innovation weight can be selected as either $\theta=n$ or $\theta=1$.

However, in the optimization case we do not believe that these theoretical rates reflects real world performance and parameter choices based on them might therefore not perform particularly well.
We base this belief on our experience with numerical experiments.
For $\theta \neq n$ and $\theta \neq 1$, we have not found any optimization problem where the step-size condition in \thmref{thm:conv-func} appears to be tight.
Also, using $\lambda=(2Lc)^{-1}$ as suggested by the \thmref{thm:conv-func} can in some cases lead to impractically small step-sizes.
For instance, if $\lambda =(2Lc)^{-1}$ was used in the experiments in \secref{sec:num-exp}, a couple of the experiments would have step-sizes over 1000 times smaller than the ones used now.
One can of course not disprove a worst case analysis with experiments but we still feel they indicate a conservative analysis, even though the analysis improves on the previous best results.

\begin{proof}[Proof of \thmref{thm:conv-coco}]
	Apply \lemref{lem:fejer-cond} with $\xi = 0$, the iterates given by \eqref{eq:zupdate} then satisfy the following for all $z^\star \in Z^\star$,
	\begin{equation}\label{eq:coco-lyapunov}
		\begin{aligned}
			&\Exp[\norm{z^{k+1} - z^\star}_{H\otimes I}^2 | \mathcal{F}_k] \\
			&\qquad\leq \norm{z^k - z^\star}_{H\otimes I}^2 - \norm{\mathbf{B}z^k - \mathbf{B}z^\star}_{(2 M  - \Exp[U_{i^k}^THU_{i^k}]) \otimes I }^2.
		\end{aligned}
	\end{equation}
	Assuming $H \succ 0$ and $2M - \Exp [U_{i^k}^T H U_{i^k}] \succ 0$ can \propref{prop:supermartingale} be applied.
	We will later prove that this assumption indeed does hold.
	\propref{prop:supermartingale} gives a.s.\ summability of $\norm{\mathbf{B}z^k - \mathbf{B}z^\star}_{(2 M - \Exp [U_{i^k}^THU_{i^k}]) \otimes I }^2$ and hence will $\mathbf{B}z^k \to \mathbf{B}z^\star$ almost surely.
	\lemref{lem:clusterpoints} then gives that all cluster points of $(z^k)_{k\in\N}$ are in $Z^\star$ almost surely.
	Finally, since \propref{prop:supermartingale} ensures the a.s.\ convergence of $\norm{z^k - z^\star}_{H\otimes I}^2$ and since $\R^{N(n+1)}$ with the inner product $\inprod{(H\otimes I)\cdot}{\cdot}$ is a finite dimensional Hilbert space, \propref{prop:stochseqconv} gives the almost sure convergence of $z^k\to z^\star \in Z^\star$.

	There always exists a $\lambda$ such that $2M - \Exp [U_{i^k}^T H U_{i^k}]$ and $H$ are positive definite.
	First we show that $H \succ 0$ always holds for $\lambda > 0$.
	Taking the Schur complement of $1$ in $H$ gives
	\begin{align*}
		\frac{\lambda}{L}I + \frac{\lambda^2}{n^2}(n-\theta)^2 \mathbf{11}^T - \frac{\lambda^2}{n^2}(n-\theta)^2\mathbf{11}^T = \frac{\lambda}{L}I \succ 0.
	\end{align*}
	Hence is $H \succ 0$ since the Schur complement is positive definite.

	We now show $2M - \Exp [U_{i^k}^T H U_{i^k}] \succ 0$.
	Straightforward algebra, see the supplementary material, yields
	\begin{align*}
		2M - \Exp [U_{i^k}^T H U_{i^k}]
		&=
		\begin{bmatrix}
			1 & 0 \\
			0 & 1
		\end{bmatrix} \otimes \frac{\lambda}{nL} I
		-
		\begin{bmatrix}
			1 & 0 \\
			0 & 1
		\end{bmatrix} \otimes \frac{\lambda^2}{n} I
		+
		\begin{bmatrix}
			0 & 1 \\
			1 & 0
		\end{bmatrix} \otimes \frac{\lambda^2}{n}(I - \frac{1}{n}\mathbf{11}^T)
		\\
		&\quad
		-
		\begin{bmatrix}
			0 & 1\\
			1 & 0
		\end{bmatrix} \otimes (n-\theta)\frac{\lambda^2}{n^2}\mathbf{11}^T
		+
		\begin{bmatrix}
			0 & 0 \\
			0 & 1
		\end{bmatrix} \otimes \frac{\lambda^2}{n^2}\mathbf{11}^T.
	\end{align*}
	Positive definiteness of this matrix is established by ensuring positivity of the smallest eigenvalue $\sigma_{\min}$.
	The smallest eigenvalue $\sigma_{\min}$ is greater than the sum of the smallest eigenvalue of each term.
	For the eigenvalues of the Kronecker products, see \eqref{eq:kroneig}.
	This gives that
	\begin{align*}
		\sigma_{\min}
		\geq \frac{\lambda}{nL} - \frac{\lambda^2}{n} - \frac{\lambda^2}{n} - \frac{\lambda^2}{n}|n-\theta| + 0
		= \frac{\lambda}{n}\big(L^{-1} - \lambda(2 + |n-\theta|)\big).
	\end{align*}
	Since $\lambda > 0$ by assumption, if
	\begin{align*}
		\frac{1}{L(2 + |n - \theta|)} > \lambda.
	\end{align*}
	we have that $\sigma_{\min} > 0$ and that $2M - \Exp[U_{i^k}^T H U_{i^k}]$ is positiv definite.

	Rates are gotten by taking the total expectation of \eqref{eq:coco-lyapunov} and adding together the inequalities from $k=0$ to $k=t$, yielding
	\begin{align*}
		\norm{z^0 - z^\star}_{H\otimes I}^2 =
		&\Exp[\norm{z^0 - z^\star}_{H\otimes I}^2] - \Exp[\norm{z^{t+1} - z^\star}_{H\otimes I}^2] \\
		&\geq {\textstyle\sum_{k=0}^t} \Exp\big[ \norm{\mathbf{B}z^k - \mathbf{B}z^\star}_{(2 M  - \Exp[U_{i^k}^THU_{i^k}]) \otimes I }^2 \big] \\
		&\geq {\textstyle\sum_{k=0}^t} \sigma_{\min} \Exp[\norm{\mathbf{B}z^k - \mathbf{B}z^\star}^2 ] \\
		&\geq \sigma_{\min}(t+1) \min_{k\in\{0,\dots,t\}} \Exp[\norm{\mathbf{B}z^k - \mathbf{B}z^\star}^2].
	\end{align*}
	Putting in the lower bound on $\sigma_{\min}$ and rearranging yields
	\begin{align*}
		\min_{k\in\{0,\dots,t\}} \Exp[\norm{\mathbf{B}z^k - \mathbf{B}z^\star}^2]
		\leq
		\tfrac{n}{\lambda(L^{-1} - \lambda(2 + |n-\theta|))(t+1)}\norm{z^0 - z^\star}_{H\otimes I}^2.
	\end{align*}
	From the definition of $H$ in \lemref{lem:fejer-cond} we have
	\begin{align*}
		\norm{z^0 - z^\star}_{H\otimes I}^2
		&= \norm{x^0 - x^\star}^2 + \tfrac{\lambda}{L}{\textstyle\sum_{i=1}^n}\norm{y_i^0 - R_ix^\star}^2 + \lambda^2(n-\theta)^2\norm{\tfrac{1}{n}{\textstyle\sum_{i=1}^n}y_i^0}^2 \\
		&\quad - 2\lambda(n-\theta)\inprod{x^0-x^\star}{\tfrac{1}{n}{\textstyle\sum_{i=1}^n}y_i^0 }
	\end{align*}
	where $z^\star = (x^\star,R_1x^\star,\dots,R_nx^\star)$.
	Since this hold for any $z^\star\in Z^\star$ and hence any $x^\star\in X^\star$, the results of theorems follows by minimizing the RHS over $x^\star \in X^\star$.
	Note, since $R_ix^\star$ constant for all $x^\star \in X^\star$, the objective is convex and, since $X^\star$ is closed and convex, the minimum is then attained.
\end{proof}

\begin{proof}[Proof of \thmref{thm:conv-func}]
	Combining \lemref{lem:fejer-cond} and \ref{lem:funcdecrease} yield
	\begin{align*}
		\Exp&[\norm{z^{k+1} - z^\star}_{H\otimes I}^2 + 2\lambda (n-\theta)(F((K\otimes I)z^{k+1}) - F(x^\star))| \mathcal{F}_k] \\
		&\quad\leq \norm{z^k - z^\star}_{H\otimes I}^2 + 2\lambda (n-\theta)(F((K\otimes I)z^{k}) - F(x^\star))\\
		&\quad\quad- \norm{\mathbf{B}z^k - \mathbf{B}z^\star}_{(2 M - \Exp [U_{i^k}^THU_{i^k}] + \lambda (n-\theta)S - \xi I) \otimes I }^2  - {\xi n L}\inprod{\nabla F(x^k) }{x^k - x^\star}
	\end{align*}
	which holds for all $k\in\N$, $\xi \in [0, \frac{2\lambda}{nL}]$, and $z^\star \in Z^\star$.
	Since $H \succ 0$ for $\lambda > 0$, see the proof of \thmref{thm:conv-coco}, the first term is non-negative while the second term is non-negative if $\theta \leq n$.
	From cocoercivity of $\nabla F$, the last term is non-positive and we assume, for now, that there exists $\lambda > 0$ and $\frac{2\lambda}{nL}\geq \xi > 0$ such that $2 M - \Exp [U_{i^k}^THU_{i^k}] + \lambda (n-\theta) S - \xi I \succ 0$, making the third term non-positive.

	Applying \propref{prop:supermartingale} gives the a.s.\ summability of
	\begin{align*}
		\norm{\mathbf{B}z^k - \mathbf{B}z^\star}_{(2 M - \Exp [U_{i^k}^THU_{i^k}] + \lambda (n-\theta)S - \xi I) \otimes I }^2  + {\xi n L}\inprod{\nabla F(x^k) - \nabla F(x^\star)}{x^k - x^\star}.
	\end{align*}
	Since both term are positive, both terms are a.s.\ summable.
	From the first term we have the a.s.\ convergence of $\mathbf{B}z^k \to  \mathbf{B}z^\star$ and	\lemref{lem:clusterpoints} then gives that all cluster points of $(z^k)_{k\in\N}$ are almost surely in $Z^\star$.
	For the second term we note that by convexity we have
	\begin{align*}
		\inprod{\nabla F(x^k) - \nabla F(x^\star)}{x^k - x^\star} \geq F(x^k) - F(x^\star) \geq 0
	\end{align*}
	and $F(x^k) - F(x^\star)$ then is summable a.s.\ since $\xi nL > 0$.
	Using smoothness of $F$, \eqref{eq:descent} and the notation from \eqref{eq:fvalcomb-cvx} gives
	\begin{align*}
		&F(x^\star) \leq F((K\otimes I)z^k) \\
		&\qquad = F(x^k + \tfrac{\lambda}{n}(D\otimes I)\mathbf{B}z^k) \\
		&\qquad \leq F(x^k) + \inprod{(G\otimes I) \mathbf{B}z^k}{\tfrac{\lambda}{n}(D\otimes I) \mathbf{B}z^k} + \tfrac{L}{2}\norm{\tfrac{\lambda}{n}(D\otimes I)\mathbf{B}z^k}^2 \\
		&\qquad \leq F(x^k) + \norm{(G\otimes I) \mathbf{B}z^k}\norm{\tfrac{\lambda}{n}(D\otimes I) \mathbf{B}z^k} + \tfrac{L}{2}\norm{\tfrac{\lambda}{n}(D\otimes I)\mathbf{B}z^k}^2 \\
		&\qquad \to F(x^\star) \text{ a.s.}
	\end{align*}
	since $(G\otimes I)\mathbf{B}z^k \to (G\otimes I)\mathbf{B}z^\star = 0$ and $(D\otimes I)\mathbf{B}z^k \to (D\otimes I)\mathbf{B}z^\star = 0$ almost surely.
	Therefore we have the a.s.\ convergence of $F((K\otimes I)z^k) - F(x^\star) \to 0$.

	From \propref{prop:supermartingale} we can also conclude that $\norm{z^k - z^\star}_{H\otimes I}^2 + 2\lambda (n-\theta)(F((K\otimes I)z^{k}) - F(x^\star))$ a.s.\ converge to a non-negative random variable.
	Since $F((K\otimes I)z^k) - F(x^\star) \to 0$ a.s.\ we have that $\norm{z^k - z^\star}_{H\otimes I}^2$ also must a.s.\ converge to a non-negative random variable.
	\propref{prop:stochseqconv} then give the almost sure convergence of $(z^k)_{k\in\N}$ to $Z^\star$.

	We now show that there exists $\lambda > 0$ and $\xi > 0$ such that
	\begin{align*}
		&2M - \Exp [U_{i^k}^T H U_{i^k}] + \lambda (n-\theta) S - \xi I \\
		&\quad=
		\begin{bmatrix}
			1 & 0 \\
			0 & 1
		\end{bmatrix} \otimes \frac{\lambda}{nL} I
		-
		\begin{bmatrix}
			1 & 0 \\
			0 & 1
		\end{bmatrix} \otimes \frac{\lambda^2}{n} I
		+
		\begin{bmatrix}
			0 & 1 \\
			1 & 0
		\end{bmatrix} \otimes \frac{\lambda^2}{n}(I - \frac{1}{n}\mathbf{11}^T)
		+
		\begin{bmatrix}
			0 & 0 \\
			0 & 1
		\end{bmatrix} \otimes \frac{\lambda^2}{n^2}\mathbf{11}^T
		\\
		&\quad\quad +
		\begin{bmatrix}
			2 & -1 \\
			-1 & 0
		\end{bmatrix} \otimes  (n-\theta)(\theta - 1)\frac{\lambda^2}{n^3} \mathbf{11}^T
		-
		\begin{bmatrix}
			1  & -1  \\
			-1  & 1
		\end{bmatrix} \otimes  (n-\theta)(\theta - 1)^2\frac{L\lambda^3}{n^3}  I
		\\
		&\quad\quad-
		\begin{bmatrix}
			1 & 0\\
			0 & 1
		\end{bmatrix} \otimes \xi I \succ 0.
	\end{align*}
	We show positive definiteness by ensuring that the smallest eigenvalue is positive.
	The smallest eigenvalue $\sigma_{\min}$ is greater than the sum of the smallest eigenvalues of each term,
	\begin{align*}
		\sigma_{\min}
		\geq &\frac{\lambda}{nL} - \frac{\lambda^2}{n} - \frac{\lambda^2}{n} + 0 + (1 - \tfrac{\theta-1}{|\theta-1|} \sqrt{2})(n-\theta)(\theta - 1)\frac{\lambda^2}{n^2} \\
		&- 2(n-\theta)(\theta - 1)^2 \frac{L\lambda^3}{n^3} - \xi.
	\end{align*}
	Assuming $\lambda \leq \frac{1}{2L}$ yields the following lower bound on the smallest eigenvalue
	\begin{align*}
		\sigma_{\min}
		&\geq \frac{\lambda}{nL} - \frac{2\lambda^2}{n} + (1- \tfrac{\theta-1}{|\theta-1|} \sqrt{2})(n-\theta)(\theta-1)\frac{\lambda^2}{n^2} - (n-\theta)(\theta - 1)^2 \frac{\lambda^2}{n^3} - \xi
		\\
		&= \frac{\lambda}{n}\big(L^{-1} - \lambda\big(2 + (n-\theta)\tfrac{\theta-1}{n}\big(\tfrac{\theta - 1}{n} - 1 + \tfrac{\theta-1}{|\theta-1|} \sqrt{2}\big)\big)\big) - \xi.
	\end{align*}
	Selecting
	\begin{align*}
		\xi = \frac{\lambda}{2n}\big(L^{-1} - \lambda\big(2 + (n-\theta)\tfrac{\theta-1}{n}\big(\tfrac{\theta - 1}{n} - 1 + \tfrac{\theta-1}{|\theta-1|} \sqrt{2}\big)\big)\big),
	\end{align*}
	which satisfy the assumption $\frac{2\lambda}{nL}\geq \xi > 0$, yield $\sigma_{\min} \geq \xi$.
	Since $\lambda > 0$ by assumption, if
	\begin{align*}
		\frac{1}{L} \frac{1}{2 + (n-\theta)\tfrac{\theta-1}{n}(\tfrac{\theta - 1}{n} - 1 + \tfrac{\theta-1}{|\theta-1|} \sqrt{2})}  > \lambda
	\end{align*}
	we have that $\sigma_{\min} \geq \xi > 0$ and hence that the examined matrix is positive definite.
	Furthermore, if $\lambda$ satisfies the above inequality it also satisfies the assumption $\lambda \leq \frac{1}{2L}$.

	Rates are gotten in the same way as for \thmref{thm:conv-coco}, the total expectation is taken of the Lyapunov inequality at the beginning of the proof and the inequalities are summed from $k=0$ to $k=t$.
	\begin{align*}
		&\norm{z^0 - z^\star}_{H\otimes I}^2 + 2\lambda (n-\theta)(F((K\otimes I)z^{0}) - F(x^\star))\\
		&\quad\geq {\textstyle\sum_{k=0}^t}\big(\sigma_{\min} \Exp[\norm{\mathbf{B}z^k - \mathbf{B}z^\star}^2]  + \Exp[\sigma_{\min} n L\inprod{\nabla F(x^k) }{x^k - x^\star}]\big) \\
		&\quad\geq \sigma_{\min}(t+1) \min_{k\in\{1,\dots,t\}}\big(\Exp[\norm{\mathbf{B}z^k - \mathbf{B}z^\star}^2] + \Exp[ n L\inprod{\nabla F(x^k) }{x^k - x^\star}]\big) \\
		&\quad\geq \sigma_{\min}(t+1) \min_{k\in\{1,\dots,t\}}\big(\Exp[\norm{\mathbf{B}z^k - \mathbf{B}z^\star}^2] + nL\Exp[F(x^k) - F(x^\star)]\big).
	\end{align*}
	Inserting the lower bound on $\sigma_{\min}$, rearranging and minimizing over $x^\star\in X^\star$ yield the results of the theorem.
\end{proof}

\section{Numerical Experiments}\label{sec:num-exp}
A number of experiments, outlined below, were performed to verify the tightness of the theory in the cocoercive operator case and examine the effect of bias in the cocoercive gradient case.
The experiments were implemented in \texttt{Julia} \cite{bezansonJuliaFreshApproach2017} and, together with several other VR-SG methods, can be found at \url{https://github.com/mvmorin/VarianceReducedSG.jl}.

\subsection{Cocoercive Operators Case}
\begin{figure*}[t]
	\begin{minipage}[t]{.49\textwidth}
		\centering
		\includegraphics[width=\textwidth]{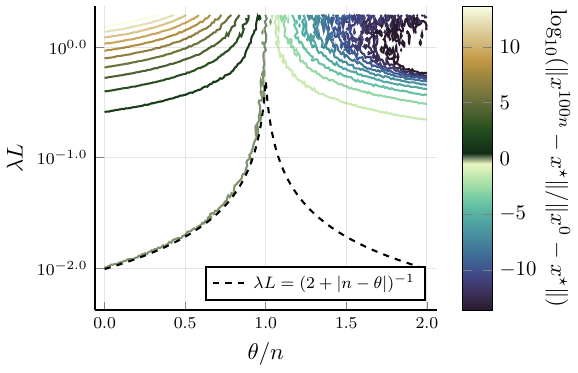}
		{(a) $n=100$}
	\end{minipage}
	\begin{minipage}[t]{.49\textwidth}
		\centering
		\includegraphics[width=\textwidth]{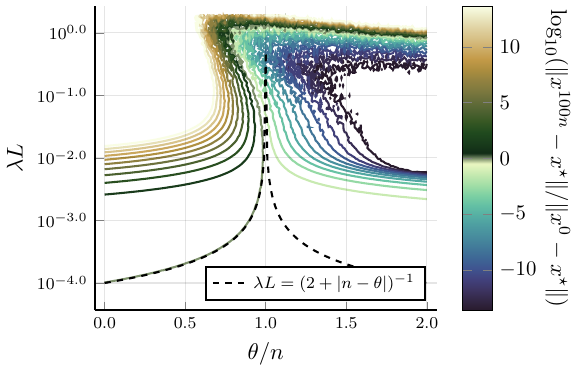}
		{(b) $n=10000$}
	\end{minipage}
	\caption{
		Root-finding of Averaged Rotations:
		Relative distance to the solution after $100n$ iterations of SVAG together the step-size upper bound, $\lambda L < (2 + |n - \theta|)^{-1}$.
		Note how well the $0$th level, i.e., the boundary between convergence and divergence, follow the upper bound on the step-size.
	}\label{fig:averagedrotation}
\end{figure*}

In order for the difference between cocoercive operators and cocoercive gradients to not be an artifact of our analysis, the results in the operator case can not be overly conservative.
We therefore construct a cocoercive operator problem for which the results appear to be tight, thereby verifying the difference.
Consider problem~\eqref{eq:zerprob} where the operator $R_i : \R^2 \to \R^2$ is an averaged rotation
\begin{align*}
	R_i =
	\frac{1}{2}
	\begin{bigmatrix}
		1 & 0 \\
		0 & 1
	\end{bigmatrix}
	+
	\frac{1}{2}
	\begin{bigmatrix}
		\cos\tau & -\sin\tau\\
		\sin\tau & \cos\tau
	\end{bigmatrix}
\end{align*}
for all $i \in \{1,\dots,n\}$ and some $\tau\in[0, 2\pi)$.
The operators are $1$-cocoercive and the zero vector is the only solution to \eqref{eq:zerprob} if $\tau\neq\pi$.
The step-size condition from \thmref{thm:conv-coco} appears to be tight for $\theta\in[0,n]$ when the angle of rotation $\tau$ approaches $\pi$.
We therefore let $\tau = \frac{179}{180}\pi$ and solve the problem with different configurations of step-size $\lambda$ and innovation weight $\theta$.

Figure \ref{fig:averagedrotation} displays the relative distance to the solution after $100n$ iterations of SVAG together with the upper bound on the step-size.
When $\theta\in[0,n]$ and $\lambda$ exceeds the upper bound, the distance to the solution increases for both $n=100$ and $n=10000$, i.e., the method does not converge.
Hence, for $\theta\in[0,n]$, the step-size bound in \thmref{thm:conv-coco} appears to be tight.
However, it is noteworthy that for this particular problem it seems beneficial to exceed the step-size bound when $\theta > n$.

\subsection{Cocoercive Gradients Case}
Since, as we stated in \secref{sec:thm}, we do not believe that the theoretical rates are particularly tight in the optimization case, we examine the effects of the bias numerically.
These experiments can of course not be exhaustive and we choose to focus on only the bias parameter $\theta$ and perform all experiments with the same step-size.
This also demonstrate why we believe the analysis to be conservative since the chosen step-size in some cases are a 1000 times larger than upper bound from \thmref{thm:conv-func}.
Convergence with this large of a step-size have also been seen elsewhere with both \cite{schmidt_minimizing_2017} and \cite{driggsBiasedStochasticGradient2020} disregarding their own the theoretical step-size conditions.

\begin{figure*}[t]
	\begin{minipage}[t]{.32\textwidth}
		\centering
		\includegraphics[width=\textwidth]{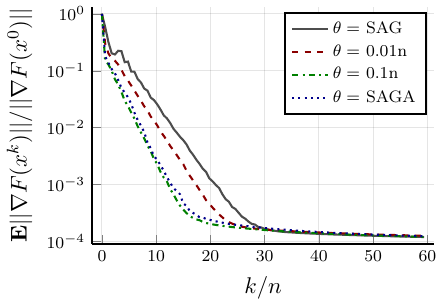}
		{(a) \texttt{protein}}
	\end{minipage}
	\begin{minipage}[t]{.32\textwidth}
		\centering
		\includegraphics[width=\textwidth]{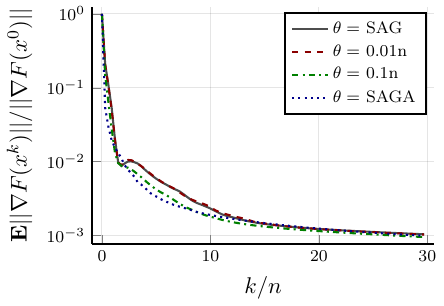}
		{(b) \texttt{breast-cancer\_scale}}
	\end{minipage}
	\begin{minipage}[t]{.32\textwidth}
		\centering
		\includegraphics[width=\textwidth]{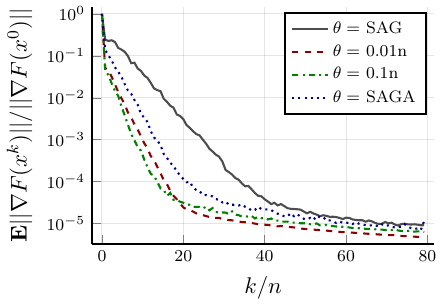}
		{(c) \texttt{a9a}}
	\end{minipage}\\
	\begin{minipage}[t]{.32\textwidth}
		\centering
		\includegraphics[width=\textwidth]{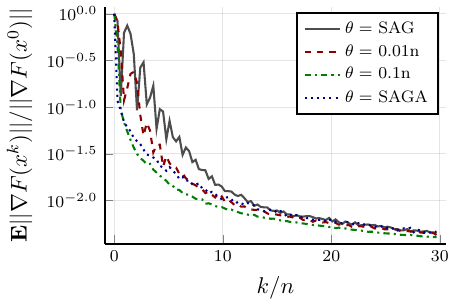}
		{(d) \texttt{gisette\_scale}}
	\end{minipage}
	\begin{minipage}[t]{.32\textwidth}
		\centering
		\includegraphics[width=\textwidth]{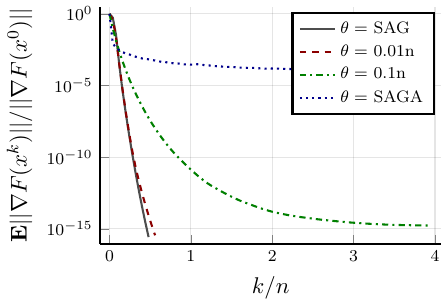}
		{(e) \texttt{mushrooms}}
	\end{minipage}
	\begin{minipage}[t]{.32\textwidth}
		\centering
		\includegraphics[width=\textwidth]{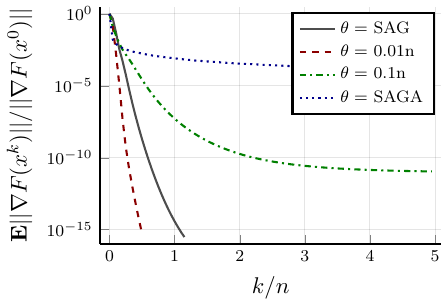}
		{(f) \texttt{mnist.scale}}
	\end{minipage}
	\caption{
		Logistic Regression: Expected gradient norm for each iteration.
		The expected value is estimated with the sample average of 100 runs.
		A step-size of $\lambda = \frac{1}{2L}$ was used in all cases.
	}\label{fig:logreg}
\end{figure*}

The experiments are done by performing a rough parameter sweep over the innovation weight $\theta$ on two different binary classification problems and we will look for patterns in how the convergence is affected.
The first problem is logistic regression,
\begin{align*}
	\min_{x} \tfrac{1}{n}\sum_{i=1}^n \log(1 + e^{-y_i a_i^T x}).
\end{align*}
The second is SVM with a square hinge loss,
\begin{align*}
	\min_x \tfrac{1}{n}\sum_{i=1}^n\big(\max(0,1-y_i a_i^T x)^2 + \tfrac{\gamma}{2}\norm{x}^2\big)
\end{align*}
where $\gamma > 0$ is a regularization parameter.
In both problems are $y_i \in \{-1,1\}$ the label and $a_i \in \R^N$ the features of the $i$th training data point.
Note, although not initially obvious, $\max(0,\cdot)^2$ is convex and differentiable with Lipschitz continuous derivative and the second problem is therefore indeed smooth.
The logistic regression problem does not necessarily have a unique solution and the distance to the solution set is therefore hard to estimate.
For this reason, we examine the convergence of $\|\nabla F(x^k)\|\to 0$ instead of the distance to the solution set.

The datasets for both these classification problems are taken from the \texttt{LibSVM}\cite{changLIBSVMLibrarySupport2011} collection of datasets.
The number of examples in the datasets varies between $n = 683$ and $n = 60,000$ while the number of features is between $N = 10$ and $N = 5,000$.
Two of the datasets, \texttt{mnist.scale} and \texttt{protein}, consist of more than 2 classes.
These are converted to binary classification problems by grouping the different classes into two groups.
For the digit classification dataset \texttt{mnist.scale}, the digits are divided into the groups 0-4 and 5-9.
For the \texttt{protein} data set, the classes are grouped as 0 and 1-2.
The results of solving the classification problems above can be found in Figures \ref{fig:logreg} and \ref{fig:sqrhingesvm}.

\begin{figure*}[t]
	\begin{minipage}[t]{.32\textwidth}
		\centering
		\includegraphics[width=\textwidth]{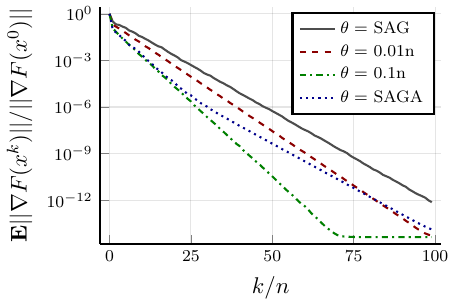}
		{(a) \texttt{protein}, $\gamma = 10^{-3}$}
	\end{minipage}
	\begin{minipage}[t]{.32\textwidth}
		\centering
		\includegraphics[width=\textwidth]{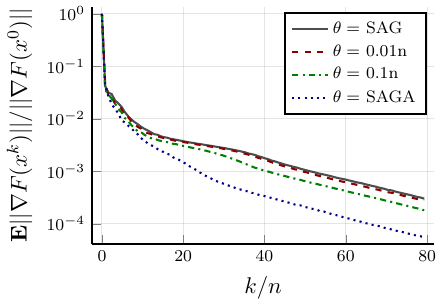}
		{(b) \centering \texttt{breast-cancer\_scale}, $\gamma = 10^{-3}$}
	\end{minipage}
	\begin{minipage}[t]{.32\textwidth}
		\centering
		\includegraphics[width=\textwidth]{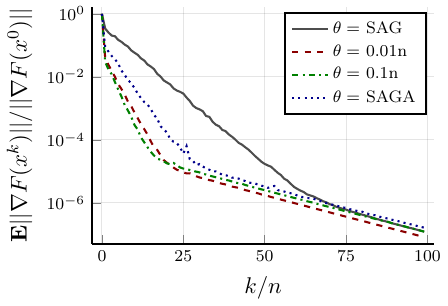}
		{(c) \texttt{a9a}, $\gamma = 10^{-4}$}
	\end{minipage}\\
	\begin{minipage}[t]{.32\textwidth}
		\centering
		\includegraphics[width=\textwidth]{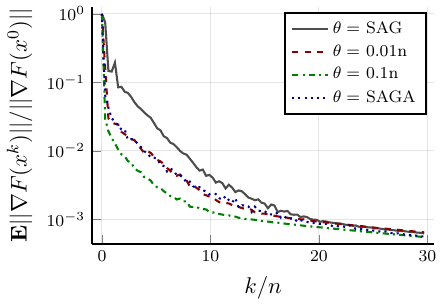}
		{(d) \texttt{gisette\_scale}, $\gamma = 10^{-1}$}
	\end{minipage}
	\begin{minipage}[t]{.32\textwidth}
		\centering
		\includegraphics[width=\textwidth]{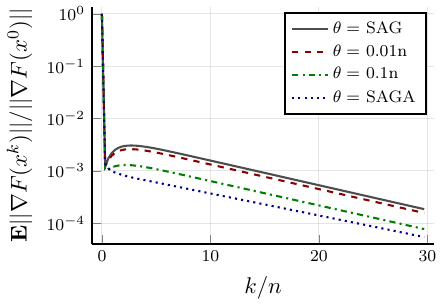}
		{(e) \texttt{mushrooms}, $\gamma = 10^{-3}$}
	\end{minipage}
	\begin{minipage}[t]{.32\textwidth}
		\centering
		\includegraphics[width=\textwidth]{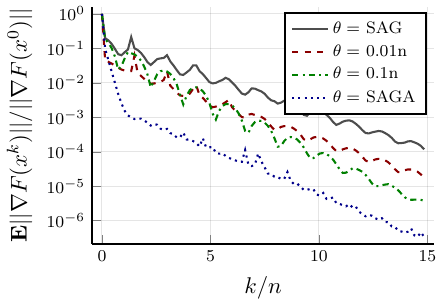}
		{(f) \texttt{mnist.scale}, $\gamma = 10^{-1}$}
	\end{minipage}
	\caption{
		Square Hinge Loss SVM: Expected gradient norm for each iteration.
		The expected value is estimated with the sample average of 100 runs.
		A step-size of $\lambda = \frac{1}{2L}$ was used in all cases.
	}\label{fig:sqrhingesvm}
\end{figure*}

From Figures \ref{fig:logreg} and \ref{fig:sqrhingesvm} it appears like the biggest difference between the innovation weights are in the early stages of the convergence.
Most innovation weight choices appear to eventually converge with the same rate.
In the cases where this does not happen, the fastest converging choice of innovation weight actually reaches machine precision.
It is therefore not possible to say whether these cases would eventually reach the same rate as well.
Since none of the choices of $\theta$ appears to consistently be at a significant disadvantage, even though the step-size used exceeds the upper bound in \thmref{thm:conv-func} when $\theta= 0.1n$ and $\theta = 0.01n$, we conjecture that the asymptotic rates for a given step-size is independent of $\theta$.

The initial phase can clearly have a large impact on the convergence and it can therefore still be a benefit to tuning the bias.
However, comparing the different choices of innovation weight yields no clear conclusion since no single choice of innovation weight consistently outperforms another.
In most cases do the lower bias choices---$\theta = n$ (SAGA) or $\theta = 0.1n$---seem perform best but, when they do not, the high bias choices---$\theta = 1$ (SAG) and $\theta=0.01n$---perform significantly better.
Another observation is that lowering $\theta$ increases any oscillations if they are present.
We speculate that it is due to the increased inertia and we also believe that this inertia is what allows the lower innovation weights to sometimes perform better.

\section{Conclusion}
We presented SVAG, a variance-reduced stochastic gradient method with adjustable bias and with SAG and SAGA as special cases.
It was analyzed in two scenarios, one being the minimization problem of a finite sum of functions with cocoercive gradients and the other being finding a root of a finite sum of cocoercive operators.
The analysis improves on the previously best known analyses in both settings and, more significantly, the two different scenarios gave significantly different convergence conditions for the step-size.
In the cocoercive operator setting a much more restrictive condition was found and it was verified numerically.
This difference is not present in ordinary gradient descent and can therefore easily be overlooked, however, these results suggest that is inadvisable in the variance-reduced stochastic gradient setting.

The theoretical results in the minimization case was further examined with numerical experiments.
Several choices of bias was examined but we did not find the same dependence on the bias that the theory suggests.
In fact, the asymptotic convergence behavior was similar for the different choices of bias, indicating that further improvements of the theory is still needed.
The bias mainly impacted the early stages of the convergence and in a couple of cases this impact was significant.
There might therefore still be benefits to tuning the bias to the particular problem but further work is needed to efficiently do so.

\paragraph{\normalfont{\textbf{Funding}}}
This work is funded by the Swedish Research Council via grant number 2016-04646.

\bibliography{references}

\newpage
{
	\textbf{\LARGE\centering
		Supplementary Proofs \\
		\rule{0.6\linewidth}{1pt}\vspace{0.5em} \\
		Cocoercivity, Smoothness and Bias in Variance-Reduced Stochastic Gradient Methods \\
	}
}

\appendix
\section{Matrix Identities}
Here we verify the matrix identities used in the proofs.

\subsection{}
\begin{align*}
	\Exp[U_{i^k}] =
	\frac{1}{n}\sum_{i=1}^n\begin{bmatrix}
		\frac{\lambda}{n}\theta e_{i^k}^T & -\frac{\lambda}{n}\theta e_{i^k}^T+\frac{\lambda}{n}\mathbf{1}^T \\
		-E_{i^k} & E_{i^k}
	\end{bmatrix}=
	\frac{1}{n}
	\begin{bmatrix}
		\frac{\lambda}{n}\theta\mathbf{1}^T & \frac{\lambda}{n}(n-\theta)\mathbf{1}^T \\
		-I & I
	\end{bmatrix}
\end{align*}

\subsection{}
\begin{align*}
	H\Exp[U_{i^k}]
	&=
	\frac{1}{n}
	\begin{bmatrix}
		1 & - \frac{\lambda}{n}(n-\theta) \mathbf{1}^T \\
		-\frac{\lambda}{n}(n-\theta)\mathbf{1} & \frac{\lambda}{L} I + \frac{\lambda^2}{n^2}(n-\theta)^2 \mathbf{11}^T
	\end{bmatrix}
	\begin{bmatrix}
		\frac{\lambda}{n}\theta\mathbf{1}^T & \frac{\lambda}{n}(n-\theta)\mathbf{1}^T \\
		-I & I
	\end{bmatrix}\\
	&=
	\begin{bmatrix}
		\frac{\lambda}{n}\mathbf{1}^T & 0 \\
		- \frac{\lambda^2}{n^2}(n-\theta)\mathbf{11}^2 - \frac{\lambda}{nL}I & \frac{\lambda}{nL}I
	\end{bmatrix}
\end{align*}

\subsection{}
For $\Exp[U_{i^k}^T H U_{i^k}]$, first note that $U_{i^k}$ can be written as
\begin{align*}
	U_{i^k} &=
	\begin{bmatrix}
		\frac{\lambda}{n}\theta e_{i^k}^T & -\frac{\lambda}{n}\theta e_{i^k}^T+\frac{\lambda}{n}\mathbf{1}^T \\
		-E_{i^k} & E_{i^k}
	\end{bmatrix}
	=
	\begin{bmatrix}
		e_{i^k}^T & 0 \\
		0 & E_{i^k}
	\end{bmatrix}
	\begin{bmatrix}
		\frac{\lambda}{n}\theta I & -\frac{\lambda}{n}\theta I \\
		-I & I
	\end{bmatrix} +
	\begin{bmatrix}
		0 & \frac{\lambda}{n} \mathbf{1}^T \\
		0 & 0
	\end{bmatrix},
\end{align*}
which gives
\begin{align*}
	&U_{i^k}^T H U_{i^k} \\
	&\quad=
	\left(
		\begin{bmatrix}
			\frac{\lambda}{n}\theta I & -I \\
			-\frac{\lambda}{n}\theta I & I
		\end{bmatrix}
		\begin{bmatrix}
			e_{i^k} & 0 \\
			0 & E_{i^k}
		\end{bmatrix}
		+
		\begin{bmatrix}
			0 & 0 \\
			\frac{\lambda}{n} \mathbf{1} & 0
		\end{bmatrix}
	\right)
	H
	\left(
		\begin{bmatrix}
			e_{i^k}^T & 0 \\
			0 & E_{i^k}
		\end{bmatrix}
		\begin{bmatrix}
			\frac{\lambda}{n}\theta I & -\frac{\lambda}{n}\theta I \\
			-I & I
		\end{bmatrix}
		+
		\begin{bmatrix}
			0 & \frac{\lambda}{n}\mathbf{1}^T \\
			0 & 0
		\end{bmatrix}
	\right)
	\\
	&\quad=
	\begin{bmatrix}
		0 & 0 \\
		\frac{\lambda}{n} \mathbf{1} & 0
	\end{bmatrix}
	H
	\begin{bmatrix}
		0 & \frac{\lambda}{n} \mathbf{1}^T \\
		0 & 0
	\end{bmatrix}
	+
	\begin{bmatrix}
		\frac{\lambda}{n}\theta I & -I \\
		-\frac{\lambda}{n}\theta I & I
	\end{bmatrix}
	\begin{bmatrix}
		e_{i^k} & 0 \\
		0 & E_{i^k}
	\end{bmatrix}
	H
	\begin{bmatrix}
		e_{i^k}^T & 0 \\
		0 & E_{i^k}
	\end{bmatrix}
	\begin{bmatrix}
		\frac{\lambda}{n}\theta I & -\frac{\lambda}{n}\theta I \\
		-I & I
	\end{bmatrix}
	\\
	&\quad\quad
	+
	\begin{bmatrix}
		0 & 0 \\
		\frac{\lambda}{n} \mathbf{1} & 0
	\end{bmatrix}
	H
	\begin{bmatrix}
		e_{i^k}^T & 0 \\
		0 & E_{i^k}
	\end{bmatrix}
	\begin{bmatrix}
		\frac{\lambda}{n}\theta I & - \frac{\lambda}{n}\theta I \\
		-I & I
	\end{bmatrix}
	+
	\begin{bmatrix}
		\frac{\lambda}{n}\theta I & -I \\
		-\frac{\lambda}{n}\theta I & I
	\end{bmatrix}
	\begin{bmatrix}
		e_{i^k} & 0 \\
		0 & E_{i^k}
	\end{bmatrix}
	H
	\begin{bmatrix}
		0 & \frac{\lambda}{n} \mathbf{1}^T \\
		0 & 0
	\end{bmatrix}
	.
\end{align*}
The first term is
\begin{align*}
	&
	\begin{bmatrix}
		0 & 0 \\
		\frac{\lambda}{n} \mathbf{1} & 0
	\end{bmatrix}
	H
	\begin{bmatrix}
		0 & \frac{\lambda}{n} \mathbf{1}^T \\
		0 & 0
	\end{bmatrix} \\
	&\quad=
	\begin{bmatrix}
		0 & 0 \\
		\frac{\lambda}{n} \mathbf{1} & 0
	\end{bmatrix}
	\begin{bmatrix}
		1 & - \frac{\lambda}{n}(n-\theta) \mathbf{1}^T \\
		-\frac{\lambda}{n}(n-\theta)\mathbf{1} & \frac{\lambda}{L} I + \frac{\lambda^2}{n^2}(n-\theta)^2 \mathbf{11}^T
	\end{bmatrix}
	\begin{bmatrix}
		0 & \frac{\lambda}{n} \mathbf{1}^T \\
		0 & 0
	\end{bmatrix}\\
	&\quad=
	\begin{bmatrix}
		0 & 0 \\
		\frac{\lambda}{n}\mathbf{1} & - \frac{\lambda^2}{n^2}(n-\theta)\mathbf{11}^T
	\end{bmatrix}
	\begin{bmatrix}
		0 & \frac{\lambda}{n} \mathbf{1}^T \\
		0 & 0
	\end{bmatrix}\\
	&\quad=
	\begin{bmatrix}
		0 & 0 \\
		0 & \frac{\lambda^2}{n^2}\mathbf{11}^T
	\end{bmatrix}.
\end{align*}
The second term is
\begin{align*}
	&
	\begin{bmatrix}
		\frac{\lambda}{n}\theta I & -I \\
		-\frac{\lambda}{n}\theta I & I
	\end{bmatrix}
	\begin{bmatrix}
		e_{i^k} & 0 \\
		0 & E_{i^k}
	\end{bmatrix}
	H
	\begin{bmatrix}
		e_{i^k}^T & 0 \\
		0 & E_{i^k}
	\end{bmatrix}
	\begin{bmatrix}
		\frac{\lambda}{n}\theta I & -\frac{\lambda}{n}\theta I \\
		-I & I
	\end{bmatrix}
	\\
	&\quad=
	\begin{bmatrix}
		\frac{\lambda}{n}\theta I & -I \\
		-\frac{\lambda}{n}\theta I & I
	\end{bmatrix}
	\begin{bmatrix}
		e_{i^k} & 0 \\
		0 & E_{i^k}
	\end{bmatrix}
	\begin{bmatrix}
		1 & - \frac{\lambda}{n}(n-\theta) \mathbf{1}^T \\
		-\frac{\lambda}{n}(n-\theta)\mathbf{1} & \frac{\lambda}{L} I + \frac{\lambda^2}{n^2}(n-\theta)^2 \mathbf{11}^T
	\end{bmatrix}
	\begin{bmatrix}
		e_{i^k}^T & 0 \\
		0 & E_{i^k}
	\end{bmatrix}
	\begin{bmatrix}
		\frac{\lambda}{n}\theta I & -\frac{\lambda}{n}\theta I \\
		-I & I
	\end{bmatrix}
	\\
	&\quad=
	\begin{bmatrix}
		\frac{\lambda}{n}\theta I & -I \\
		-\frac{\lambda}{n}\theta I & I
	\end{bmatrix}
	\begin{bmatrix}
		e_{i^k} & 0 \\
		0 & E_{i^k}
	\end{bmatrix}
	\begin{bmatrix}
		e_{i^k}^T & -\frac{\lambda}{n}(n-\theta)e_{i^k}^T \\
		-\frac{\lambda}{n}(n-\theta)\mathbf{1}e_{i^k}^T  & \frac{\lambda}{L}E_{i^k} + \frac{\lambda^2}{n^2}(n-\theta)^2 \mathbf{1}e_{i^k}^T
	\end{bmatrix}
	\begin{bmatrix}
		\frac{\lambda}{n}\theta I & -\frac{\lambda}{n}\theta I \\
		-I & I
	\end{bmatrix}
	\\
	&\quad=
	\begin{bmatrix}
		\frac{\lambda}{n}\theta I & -I \\
		-\frac{\lambda}{n}\theta I & I
	\end{bmatrix}
	\begin{bmatrix}
		E_{i^k} & -\frac{\lambda}{n}(n-\theta)E_{i^k} \\
		-\frac{\lambda}{n}(n-\theta)E_{i^k} & \frac{\lambda}{L}E_{i^k} + \frac{\lambda^2}{n^2}(n-\theta)^2E_{i^k}
	\end{bmatrix}
	\begin{bmatrix}
		\frac{\lambda}{n}\theta I & -\frac{\lambda}{n}\theta I \\
		-I & I
	\end{bmatrix}
	\\
	&\quad=
	\begin{bmatrix}
		\frac{\lambda}{n}\theta I & -I \\
		-\frac{\lambda}{n}\theta I & I
	\end{bmatrix}
	\begin{bmatrix}
		\lambda E_{i^k} & -\lambda E_{i^k} \\
		- \frac{\lambda}{L}E_{i^k} - \frac{\lambda^2}{n}(n-\theta)E_{i^k} & \frac{\lambda}{L}E_{i^k} + \frac{\lambda^2}{n}(n-\theta)E_{i^k}
	\end{bmatrix}
	\\
	&\quad=
	\begin{bmatrix}
		\frac{\lambda}{L}E_{i^k} + \lambda^2E_{i^k} & -\frac{\lambda}{L}E_{i^k} - \lambda^2E_{i^k}  \\
		-\frac{\lambda}{L}E_{i^k} - \lambda^2E_{i^k}  & \frac{\lambda}{L}E_{i^k} + \lambda^2E_{i^k}
	\end{bmatrix} \\
	&\quad=
	\begin{bmatrix}
		1 & -1 \\
		-1 & 1
	\end{bmatrix} \otimes (\frac{\lambda}{L} + \lambda^2) E_{i^k}.
\end{align*}
The third and forth term are
\begin{align*}
	&
	\begin{bmatrix}
		0 & 0 \\
		\frac{\lambda}{n} \mathbf{1} & 0
	\end{bmatrix}
	H
	\begin{bmatrix}
		e_{i^k}^T & 0 \\
		0 & E_{i^k}
	\end{bmatrix}
	\begin{bmatrix}
		\frac{\lambda}{n}\theta I & - \frac{\lambda}{n}\theta I \\
		-I & I
	\end{bmatrix}
	\\
	&\quad=
	\begin{bmatrix}
		0 & 0 \\
		\frac{\lambda}{n} \mathbf{1} & 0
	\end{bmatrix}
	\begin{bmatrix}
		1 & - \frac{\lambda}{n}(n-\theta) \mathbf{1}^T \\
		-\frac{\lambda}{n}(n-\theta)\mathbf{1} & \frac{\lambda}{L} I + \frac{\lambda^2}{n^2}(n-\theta)^2 \mathbf{11}^T
	\end{bmatrix}
	\begin{bmatrix}
		e_{i^k}^T & 0 \\
		0 & E_{i^k}
	\end{bmatrix}
	\begin{bmatrix}
		\frac{\lambda}{n}\theta I & - \frac{\lambda}{n}\theta I \\
		-I & I
	\end{bmatrix}
	\\
	&\quad=
	\begin{bmatrix}
		0 & 0 \\
		\frac{\lambda}{n}\mathbf{1} & -\frac{\lambda^2}{n^2}(n-\theta)\mathbf{11}^T
	\end{bmatrix}
	\begin{bmatrix}
		e_{i^k}^T & 0 \\
		0 & E_{i^k}
	\end{bmatrix}
	\begin{bmatrix}
		\frac{\lambda}{n}\theta I & - \frac{\lambda}{n}\theta I \\
		-I & I
	\end{bmatrix}
	\\
	&\quad=
	\begin{bmatrix}
		0 & 0 \\
		\frac{\lambda}{n}\mathbf{1}e_{i^k}^T & -\frac{\lambda^2}{n^2}(n-\theta)\mathbf{1} e_{i^k}^T
	\end{bmatrix}
	\begin{bmatrix}
		\frac{\lambda}{n}\theta I & -\frac{\lambda}{n}\theta I \\
		-I & I
	\end{bmatrix}
	\\
	&\quad=
	\begin{bmatrix}
		0 & 0 \\
		\frac{\lambda^2}{n}\mathbf{1} e_{i^k}^T & -\frac{\lambda^2}{n}\mathbf{1} e_{i^k}^T
	\end{bmatrix}.
\end{align*}
This results in
\begin{align*}
	U_{i^k}^T H U_{i^k}
	&=
	\begin{bmatrix}
		0 & 0 \\
		0 & \frac{\lambda^2}{n^2}\mathbf{11}^T
	\end{bmatrix}
	+
	\begin{bmatrix}
		1 & -1 \\
		-1 & 1
	\end{bmatrix} \otimes (\frac{\lambda}{L} + \lambda^2) E_{i^k}
	+
	\frac{\lambda^2}{n}\begin{bmatrix}
		0 & e_{i^k}\mathbf{1}^T \\
		\mathbf{1} e_{i^k}^T & -\mathbf{1} e_{i^k}^T - e_{i^k}\mathbf{1}^T
	\end{bmatrix}
\end{align*}
and
\begin{align*}
	\Exp[U_{i^k}^T H U_{i^k}]
	&=
	\begin{bmatrix}
		1 & -1 \\
		-1 & 1
	\end{bmatrix} \otimes (\frac{1}{L} + \lambda)\frac{\lambda}{n} I
	+
	\begin{bmatrix}
		0 & 1 \\
		1 & -1
	\end{bmatrix} \otimes \frac{\lambda^2}{n^2}\mathbf{11}^T.
\end{align*}

\subsection{}
\begin{align*}
	&2M -  \Exp[U_{i^k}^T H U_{i^k}] - \xi I
	\\
	&\quad=
	\begin{bmatrix}
		2 & -1 \\
		-1 & 2
	\end{bmatrix} \otimes \frac{\lambda}{nL} I
	-
	\begin{bmatrix}
		0 & 1\\
		1 & 0
	\end{bmatrix} \otimes (n-\theta)\frac{\lambda^2}{n^2}\mathbf{11}^T
	\\
	&\quad -
	\begin{bmatrix}
		1 & -1 \\
		-1 & 1
	\end{bmatrix} \otimes (\frac{1}{L} + \lambda)\frac{\lambda}{n} I
	-
	\begin{bmatrix}
		0 & 1 \\
		1 & -1
	\end{bmatrix} \otimes \frac{\lambda^2}{n^2}\mathbf{11}^T
	-
	\begin{bmatrix}
		1 & 0\\
		0 & 1
	\end{bmatrix} \otimes \xi I
	\\
	&\quad=
	\begin{bmatrix}
		1 & 0 \\
		0 & 1
	\end{bmatrix} \otimes \frac{\lambda}{nL} I
	-
	\begin{bmatrix}
		1 & 0 \\
		0 & 1
	\end{bmatrix} \otimes \frac{\lambda^2}{n} I
	+
	\begin{bmatrix}
		0 & 1 \\
		1 & 0
	\end{bmatrix} \otimes \frac{\lambda^2}{n}(I - \frac{1}{n}\mathbf{11}^T)
	\\
	&\quad
	-
	\begin{bmatrix}
		0 & 1\\
		1 & 0
	\end{bmatrix} \otimes (n-\theta)\frac{\lambda^2}{n^2}\mathbf{11}^T
	+
	\begin{bmatrix}
		0 & 0 \\
		0 & 1
	\end{bmatrix} \otimes \frac{\lambda^2}{n^2}\mathbf{11}^T
	-
	\begin{bmatrix}
		1 & 0\\
		0 & 1
	\end{bmatrix} \otimes \xi I
\end{align*}

\subsection{}
We have $S = S_L + S_C$ where
\begin{align*}
	S_L
	&= \Exp[Q_{i^k}^T G + G^T Q_{i^k} - L Q_{i^k}^T Q_{i^k}] \\
	&= \Exp\Bigg[
		\frac{\lambda}{n}
		\begin{bmatrix}
			(\theta - 1)e_{i^k} \\ -(\theta - 1)e_{i^k}
		\end{bmatrix}
		\frac{1}{n}
		\begin{bmatrix}
			\mathbf{1}^T & 0
		\end{bmatrix}
		+ G^T Q_{i^k} - L Q_{i^k}^T Q_{i^k}
		\Bigg|
		\mathcal{F}_k
	\Bigg] \\
	&= \Exp\Bigg[
		\frac{\lambda}{n^2}
		\begin{bmatrix}
			(\theta - 1)e_{i^k}\mathbf{1}^T & 0\\
			-(\theta - 1)e_{i^k}\mathbf{1}^T & 0
		\end{bmatrix}
		+ G^T Q_{i^k} - L Q_{i^k}^T Q_{i^k}
		\Bigg|
		\mathcal{F}_k
	\Bigg] \\
	&=
	\frac{\lambda}{n^3}
	\begin{bmatrix}
		(\theta - 1)\mathbf{11}^T & 0\\
		-(\theta - 1)\mathbf{11}^T & 0
	\end{bmatrix}
	+ \Exp[G^T Q_{i^k} - L Q_{i^k}^T Q_{i^k}] \\
	&=
	\frac{\lambda}{n^3}
	\begin{bmatrix}
		2(\theta - 1)\mathbf{11}^T & -(\theta - 1)\mathbf{11}^T\\
		-(\theta - 1)\mathbf{11}^T & 0
	\end{bmatrix}
	- \Exp[L Q_{i^k}^T Q_{i^k} ]\\
	&=
	\frac{\lambda}{n^3}
	\begin{bmatrix}
		2(\theta - 1)\mathbf{11}^T & -(\theta - 1)\mathbf{11}^T\\
		-(\theta - 1)\mathbf{11}^T & 0
	\end{bmatrix}
	\\
	&\quad-
	\Exp
	\Bigg[
		\frac{L\lambda^2}{n^2}
		\begin{bmatrix}
			(\theta - 1)e_{i^k} \\ -(\theta - 1)e_{i^k}
		\end{bmatrix}
		\begin{bmatrix}
			(\theta - 1)e_{i^k}^T & -(\theta - 1)e_{i^k}^T
		\end{bmatrix}
		\Bigg|
		\mathcal{F}_k
	\Bigg]
	\\
	&=
	\frac{\lambda}{n^3}
	\begin{bmatrix}
		2(\theta - 1)\mathbf{11}^T & -(\theta - 1)\mathbf{11}^T\\
		-(\theta - 1)\mathbf{11}^T & 0
	\end{bmatrix}
	-
	\Exp
	\Bigg[
		\frac{L\lambda^2}{n^2}
		\begin{bmatrix}
			(\theta - 1)^2 E_{i^k} & -(\theta - 1)^2 E_{i^k}  \\
			-(\theta - 1)^2 E_{i^k} & (\theta - 1)^2 E_{i^k}
		\end{bmatrix}
		\Bigg|
		\mathcal{F}_k
	\Bigg]
	\\
	&=
	\begin{bmatrix}
		2 & -1 \\
		-1 & 0
	\end{bmatrix} \otimes (\theta - 1)\frac{\lambda}{n^3} \mathbf{11}^T
	-
	\begin{bmatrix}
		1  & -1  \\
		-1  & 1
	\end{bmatrix} \otimes (\theta - 1)^2\frac{L\lambda^2}{n^3}  I
\end{align*}
and
\begin{align*}
	S_C
	&= \frac{\lambda}{n}(D^TG + G^TD) \\
	&= \frac{\lambda}{n}
	\Bigg[
		\begin{bmatrix} 0 \\ \mathbf{1} \end{bmatrix}
		\begin{bmatrix} \frac{1}{n}\mathbf{1}^T & 0 \end{bmatrix}
		+ G^TD
	\Bigg] \\
	&=
	\frac{\lambda}{n}
	\Bigg[
		\begin{bmatrix}
			0 & 0 \\
			\frac{1}{n}\mathbf{11}^T & 0
		\end{bmatrix}
		+ G^TD
	\Bigg] \\
	&=
	\begin{bmatrix}
		0 & 1 \\
		1 & 0
	\end{bmatrix} \otimes \frac{\lambda}{n^2} \mathbf{11}^T.
\end{align*}

\subsection{}
\begin{align*}
	&2M - \Exp[U_{i^k}^T H U_{i^k}] + \lambda (n-\theta) S - \xi I\\
	&\quad=
	\begin{bmatrix}
		1 & 0 \\
		0 & 1
	\end{bmatrix} \otimes \frac{\lambda}{nL} I
	-
	\begin{bmatrix}
		1 & 0 \\
		0 & 1
	\end{bmatrix} \otimes \frac{\lambda^2}{n} I
	+
	\begin{bmatrix}
		0 & 1 \\
		1 & 0
	\end{bmatrix} \otimes \frac{\lambda^2}{n}(I - \frac{1}{n}\mathbf{11}^T)
	\\
	&\quad\quad
	-
	\begin{bmatrix}
		0 & 1\\
		1 & 0
	\end{bmatrix} \otimes (n-\theta)\frac{\lambda^2}{n^2}\mathbf{11}^T
	+
	\begin{bmatrix}
		0 & 0 \\
		0 & 1
	\end{bmatrix} \otimes \frac{\lambda^2}{n^2}\mathbf{11}^T
	\\
	&\quad\quad +
	\lambda (n-\theta)
	\Bigg(
		\begin{bmatrix}
			2 & -1 \\
			-1 & 0
		\end{bmatrix} \otimes (\theta - 1)\frac{\lambda}{n^3} \mathbf{11}^T
		-
		\begin{bmatrix}
			1  & -1  \\
			-1  & 1
		\end{bmatrix} \otimes (\theta - 1)^2\frac{L\lambda^2}{n^3}  I \\
		&\quad\quad\quad +
		\begin{bmatrix}
			0 & 1 \\
			1 & 0
		\end{bmatrix} \otimes \frac{\lambda}{n^2} \mathbf{11}^T
	\Bigg)
	\\
	&\quad\quad-
	\begin{bmatrix}
		1 & 0\\
		0 & 1
	\end{bmatrix} \otimes \xi I
	\\
	\\
	&\quad=
	\begin{bmatrix}
		1 & 0 \\
		0 & 1
	\end{bmatrix} \otimes \frac{\lambda}{nL} I
	-
	\begin{bmatrix}
		1 & 0 \\
		0 & 1
	\end{bmatrix} \otimes \frac{\lambda^2}{n} I
	+
	\begin{bmatrix}
		0 & 1 \\
		1 & 0
	\end{bmatrix} \otimes \frac{\lambda^2}{n}(I - \frac{1}{n}\mathbf{11}^T)
	\\
	&\quad\quad
	-
	\begin{bmatrix}
		0 & 1\\
		1 & 0
	\end{bmatrix} \otimes (n-\theta)\frac{\lambda^2}{n^2}\mathbf{11}^T
	+
	\begin{bmatrix}
		0 & 0 \\
		0 & 1
	\end{bmatrix} \otimes \frac{\lambda^2}{n^2}\mathbf{11}^T
	\\
	&\quad\quad +
	\begin{bmatrix}
		2 & -1 \\
		-1 & 0
	\end{bmatrix} \otimes  (n-\theta)(\theta - 1)\frac{\lambda^2}{n^3} \mathbf{11}^T
	-
	\begin{bmatrix}
		1  & -1  \\
		-1  & 1
	\end{bmatrix} \otimes  (n-\theta)(\theta - 1)^2\frac{L\lambda^3}{n^3}  I
	\\
	&\quad\quad+
	\begin{bmatrix}
		0 & 1 \\
		1 & 0
	\end{bmatrix} \otimes (n-\theta)\frac{\lambda^2}{n^2} \mathbf{11}^T
	\\
	&\quad\quad-
	\begin{bmatrix}
		1 & 0\\
		0 & 1
	\end{bmatrix} \otimes \xi I
	\\
	\\
	&\quad=
	\begin{bmatrix}
		1 & 0 \\
		0 & 1
	\end{bmatrix} \otimes \frac{\lambda}{nL} I
	-
	\begin{bmatrix}
		1 & 0 \\
		0 & 1
	\end{bmatrix} \otimes \frac{\lambda^2}{n} I
	+
	\begin{bmatrix}
		0 & 1 \\
		1 & 0
	\end{bmatrix} \otimes \frac{\lambda^2}{n}(I - \frac{1}{n}\mathbf{11}^T)
	+
	\begin{bmatrix}
		0 & 0 \\
		0 & 1
	\end{bmatrix} \otimes \frac{\lambda^2}{n^2}\mathbf{11}^T
	\\
	&\quad\quad
	\\
	&\quad\quad +
	\begin{bmatrix}
		2 & -1 \\
		-1 & 0
	\end{bmatrix} \otimes  (n-\theta)(\theta - 1)\frac{\lambda^2}{n^3} \mathbf{11}^T
	-
	\begin{bmatrix}
		1  & -1  \\
		-1  & 1
	\end{bmatrix} \otimes  (n-\theta)(\theta - 1)^2\frac{L\lambda^3}{n^3}  I
	\\
	&\quad\quad-
	\begin{bmatrix}
		1 & 0\\
		0 & 1
	\end{bmatrix} \otimes \xi I
\end{align*}

\end{document}